 \newcommand{\To}{\longrightarrow}
 \newcommand{\Map}[3]{#1\, :\, #2\To #3}
 \newcommand{\Nat}{\mathbb{N}}
 \newcommand{\Int}{\mathbb{Z}}
 \newcommand{\set}[1]{\left\{#1\right\}}
 \newcommand{\Set}[2]{\set{#1\ \vert\ #2}}
\begin{document}

\title{An introduction to coding sequences of graphs}

\author{Shamik Ghosh\thanks{Corresponding author}\inst{1}
\and Raibatak Sen Gupta\inst{1}
\and M. K. Sen\inst{2}}

\institute{
Department of Mathematics, Jadavpur University, Kolkata-700032, India.\\
e-mail: sghosh@math.jdvu.ac.in,\ raibatak2010@gmail.com
\and
Department of Pure Mathematics, University of Calcutta, Kolkata-700019, India.\\
e-mail: senmk6@yahoo.com
}

\date{}
\maketitle

\begin{abstract}
\noindent
In his pioneering paper on matroids in 1935, Whitney obtained a characterization for binary matroids and left a comment at end of the paper that the problem of characterizing graphic matroids is the same as that of characterizing matroids which correspond to matrices (mod 2) with exactly two ones in each column. Later on Tutte obtained a characterization of graphic matroids in terms of forbidden minors in 1959. But it is clear that Whitney indicated about incidence matrices of simple undirected graphs. 

\vspace{1em}\noindent
In this paper, we introduce the concept of a segment binary matroid which corresponds to matrices over $\Int_2$ which has the consecutive $1$'s property (i.e., $1$'s are consecutive) for columns and obtained a characterization of graphic matroids in terms of this.

\vspace{1em}\noindent
In fact, we introduce a new representation of simple undirected graphs in terms of some vectors of finite dimensional vector spaces over $\Int_2$ which satisfy consecutive $1$'s property. The set of such vectors is called a coding sequence of a graph $G$. Among all such coding sequences we identify the one which is unique for a class of isomorphic graphs. We call it the code of the graph. We characterize several classes of graphs in terms of coding sequences. It is shown that a graph $G$ with $n$ vertices is a tree if and only if any coding sequence of $G$ is a basis of the vector space $\Int_2^{n-1}$ over $\Int_2$. 

\vspace{1em}\noindent
Moreover considering coding sequences as binary matroids, we obtain a characterization for simple graphic matroids and found a necessary and sufficient condition for graph isomorphism in terms of a special matroid isomorphism between their corresponding coding sequences. For this, we introduce the concept of strong isomorphisms of segment binary matroids and show that two simple (undirected) graphs are isomorphic if and only if their canonical sequences are strongly isomorphic segment binary matroids. 

\vspace{1em}\noindent
{\small {\bf AMS Subject Classifications:} 05C62, 05C50, 05B35.

\noindent
{\bf Keywords:} Simple undirected graph, graph representation, graph isomorphism, incidence matrix, consecutive $1$'s property, binary matroid, graphic matroid.}
\end{abstract}

\section{Introduction}

There are various representations of simple undirected graphs in terms of adjacency matrices, adjacency lists, incidence matrix, unordered pairs etc. In this paper, we introduce another representation of a simple undirected graph with $n$ vertices in terms of certain vectors in the vector space $\Int_2^{n-1}$ over $\Int_2$. We call the set of vectors representing a graph $G$ as a coding sequence of $G$ and denote it by $\beta(G,n)$. Among all such coding sequences we identify the one which is unique for a class of isomorphic graphs. We call it the code of the graph. We find characterizations of graphs which are connected, acyclic, bipartite, Eulerian or Hamiltonian in terms of $\beta(G,n)$. We prove that a graph $G$ with $n$ vertices is a tree if and only if any coding sequence of $G$ is a basis of the vector space $\Int_2^{n-1}$ over $\Int_2$.

\vspace{1em}\noindent
In his pioneering paper \cite{HW} on matroids in 1935, Whitney left the problem of characterizing graphic matroid open by making the following comment: ``The problem of characterizing linear graphs from this point of view is the same as that of characterizing matroids which correspond to matrices (mod 2) with exactly two ones in each column.'' In 1959, Tutte obtained a characterization of graphic matroids in terms of forbidden minors \cite{TT}. But it is clear that Whitney indicated about incidence matrices of simple undirected graphs. In this paper, use a variation of incidence matrix for the same characterization.

\vspace{1em}\noindent
In section 3, we introduce the concept of a segment binary matroid which corresponds to matrices over $\Int_2$ that has the consecutive $1$'s property (i.e., $1$'s are consecutive) for columns and a characterization of graphic matroids is obtained by considering $\beta(G,n)$ as a segment binary matroid. We introduce the concept of a strong isomorphism for segment binary matroids and show that two simple graphs $G$ and $H$ (with $n$ vertices each) are isomorphic if and only if $\beta (G, n)$ and $\beta (H, n)$ are strongly isomorphic segment binary matroids. 

\vspace{1em}\noindent
For graph theoretic concepts, see \cite{DBW} and for matroid related terminologies, one may consult \cite{Ox}.

\section{Coding sequences}

\begin{definition}
Let $G=(V,E)$ be a simple undirected graph with $n$ vertices and $m$ edges. Let $V=\set{v_0,v_1,\ldots,v_{n-1}}$. Define a map $\Map{f}{V}{\Nat}$ by $f(v_i)=10^i$ and another map $\Map{f^*}{E}{\Nat}$ by $f^*(v_iv_j)=|f(v_i)-f(v_j)|$, if $E\neq\emptyset$. Let $\sigma(G,n)$ be the sequence $\Set{f^*(e)}{e\in E}$ sorted in the increasing order. If $E=\emptyset$, then $\sigma(G,n)=\emptyset$. It is worth noticing that for $e=v_iv_j\in E$, $f^*(e)=|10^i-10^j|$ uniquely determines the pair $(i,j)$ as it is a natural number with $i$ digits, starting with $i-j$ number of $9$'s and followed by $j$ number of $0$'s, when $i>j$. Thus $m$ entries of $\sigma(G,n)$ are all distinct. 

\vspace{1em}\noindent
Now for $E\neq\emptyset$, we define a map $\Map{f^\#}{E}{\Int_2^{n-1}}$ by $f^\#(e)=(x_1,x_2,\ldots,x_{n-1})$, where $x_i=1$, if $(n-i)^{\text{th}}$ digit of $f^*(e)$ from the right is $9$, otherwise $x_i=0$ for $i=1,2,\ldots,n-1$. For convenience we write the field $\Int_2$ as $\set{0,1}$ instead of $\set{\bar{0},\bar{1}}$. Let $\beta(G,n)$ be the sequence $\Set{f^\#(e)}{e\in E}$ sorted in the same order as in $\sigma(G,n)$. If $E=\emptyset$, then $\beta(G,n)=\emptyset$. The sequence $\beta(G,n)$ is called a {\em \textbf{coding sequence}} of the graph $G$. 

\vspace{1em}\noindent
Naturally, $\beta(G,n)$ is not unique for a graph $G$ as it depends on the labeling $f$ of vertices. Now there are $n!$ such labellings and consequently we have at most $n!$ different $\sigma(G,n)$ for a graph $G$. Among which we choose the one, say, $\sigma_c(G,n)$ which is the minimum in the lexicographic ordering of $\Nat^m$. The corresponding $\beta(G,n)$ is called the {\em \textbf{code}} of the graph $G$ and is denoted by $\beta_c(G,n)$. Clearly $\beta_c(G,n)$ is unique for a class of isomorphic graphs with given number of vertices, though it is not always easy to determine generally.
\end{definition}

\begin{example}\label{ex1}
Consider the graph $G$ in Figure \ref{fig1} (left). We have 
$$\sigma(G,4)=(9,90,900,990)\ \text{ and }\ \beta(G,4)=\set{(0,0,1),(0,1,0),(1,0,0),(1,1,0)}$$
according to the labeling of vertices given in Figure \ref{fig1} (left). One may verify that
$$\sigma_c(G,4)=(9,90,99,900) \text{ and } \beta_c(G,4)=\set{(0,0,1),(0,1,0),(0,1,1),(1,0,0)}$$
according to the labeling of vertices shown in Figure \ref{fig1} (right).
\end{example}

\begin{figure}[h]
\begin{center}
\includegraphics*[scale=0.25]{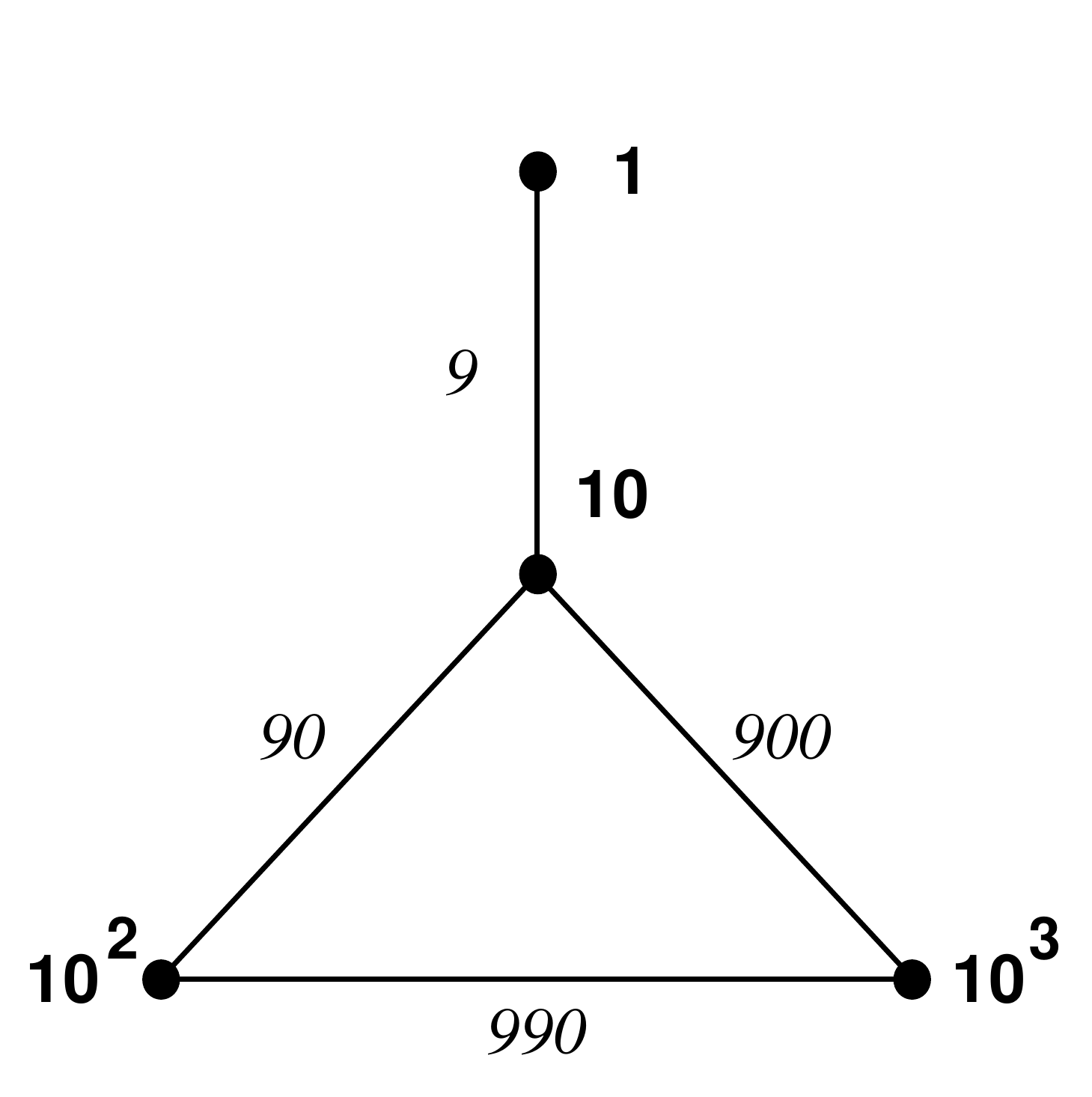}\hspace{1in} \includegraphics*[scale=0.25]{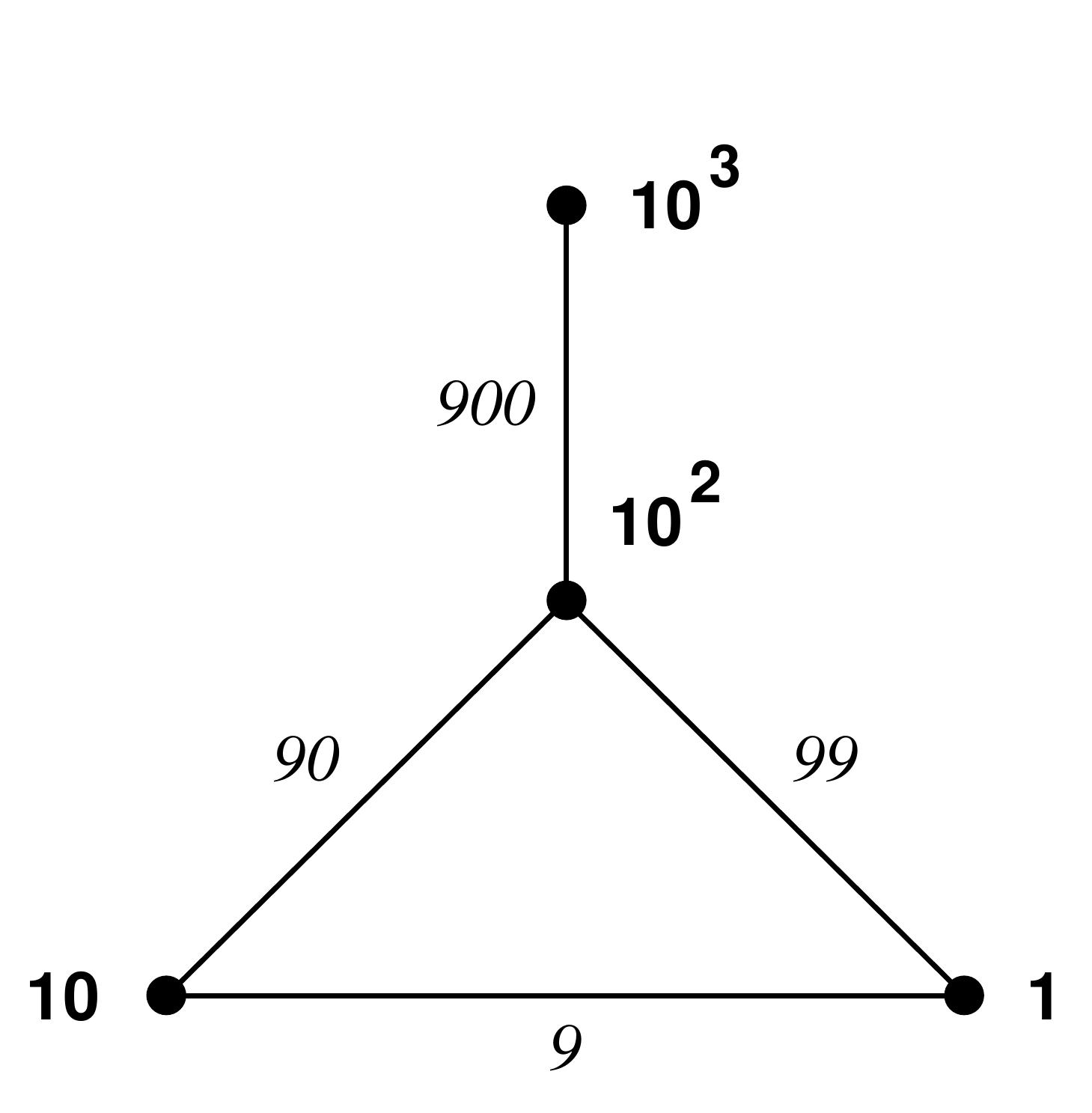}
\caption{The graph $G$ in Example \ref{ex1} with different labellings.}\label{fig1}
\end{center}
\end{figure}

\begin{remark}
An incidence matrix of a (simple undirected) graph $G=(V,E)$ is obtained by placing its vertices in rows and edges in columns and an entry in a row corresponding to a vertex $v$ and in a column corresponding to an edge $e$ of the matrix is $1$ if and only if $v$ is an end point of $e$, otherwise it is $0$. It is important to note that a coding sequence of a graph has a similarity with the incidence matrix of the graph. In fact, given a coding sequence of a graph $G$, one can easily obtain the incidence matrix of $G$ and vice-versa. Also cut-set and circuit subspaces of a vector space of dimension $|E|$ over $\Int_2$ constructed from edges of $G$ are well known \cite{C,D,G}. Further, as we mentioned in the introduction, Whitney expected the characterization of graphic matroids would be obtained from the incidence matrix. Here we consider a variation of it with a consecutive $1$'s representation as it helps us to build a very natural interplay between graph theory, matroids and linear algebra which is evident from Theorems \ref{tree} and \ref{isothm}. 
\end{remark}

\noindent
Throughout this section by a graph we mean a simple undirected graph. 

\begin{definition}\label{seqtogrph}
A non-null vector $e=(x_1,x_2,\ldots,x_{n-1})\in\Int_2^{n-1}$ is said to satisfy the {\em \textbf{consecutive $1$'s property}} if $1$'s appear consecutively in the sequence of coordinates of $e$. Let 
$$C(n-1)=\Set{v\in\Int_2^{n-1}}{v \text{ satisfies the consecutive } 1\text{'s property}}.$$
Clearly, $|C(n-1)|=\binom{n}{2}=\frac{n(n-1)}{2}$ and $G$ is a complete graph with $n$ vertices if and only if $\beta(G,n)=C(n-1)$. In fact, for every $S\subseteq C(n-1)$, there is a unique graph $G(S)$ of $n$ vertices such that $\beta(G,n)=S$. If $S=\emptyset$, then $G$ is the null graph with $n$ vertices and no edges. If $S\neq\emptyset$, each member $e$ of $S$ represents an edge of $G(S)=(V,E)$ with end points $10^{n-i}$ and $10^{n-j-1}$, where the consecutive stretch of $1$'s in $e$ starts from the $i^\text{th}$ entry and ends at the $j^\text{th}$ entry from the left and $V=\set{1,10,10^2,\ldots,10^{n-1}}$. Also it is clear that $C(n-1)\smallsetminus \beta(G,n)$ is a coding sequence of the complement $\bar{G}$ of a graph $G$ with $n$ vertices.

\vspace{0.5em}\noindent
Let $\emptyset\neq S\subseteq C(n-1)$. Let $\tilde{G}(S)$ be the subgraph of $G(S)$ obtained by removing isolated vertices (if any) from $G(S)$. Then $\tilde{G}(S)$ is the subgraph of the complete graph of $n$ vertices induced by the edges represented by the vectors in $S$.
\end{definition}

\noindent
We denote the null vector in the vector space $\Int_2^{n-1}$ by $\mathbf{0}$ for any $n\in\Nat$ and write $\Int_2^0$ for the zero-dimensional space $\set{\mathbf{0}}$. Let $S=\set{e_1,e_2,\ldots ,e_k}\subseteq\Int_2^{n-1}\smallsetminus\set{\mathbf{0}}$, ($k\in\Nat$, $k\leqslant 2^{n-1}$). As $e_i$'s are distinct, we have $e_i+e_j\neq\mathbf{0}$ for all $i\neq j$, $i,j\in\set{1,2,\ldots,k}$. Thus $S$ is a set of linearly dependent vectors in $\Int_2^{n-1}$ over $\Int_2$ if and only if there exists $A\subseteq S$, $|A|\geqslant 3$ such that $\sum\limits_{e\in A} e =\mathbf{0}$. In other words, $S\subseteq\Int_2^{n-1}$ is linearly independent over $\Int_2$ if and only if $S=\emptyset$ or $S=\set{e_1,e_2,\ldots ,e_k}$ for some $k\in\Nat$, $k\leqslant 2^{n-1}$ and $\sum\limits_{e\in A} e \neq\mathbf{0}$ for all $\emptyset\neq A\subseteq S$. We denote the linear span (over $\Int_2$) of a subset $S$ of $\Int_2^{n-1}$ by $\text{Sp}\, (S)$, i.e., $\text{Sp}\, (S)$ is the smallest subspace of $\Int_2^{n-1}$ containing $S$.

\begin{proposition}\label{3cycle}
Let $S=\set{e_1,e_2,e_3}\subseteq C(n-1)$ for some $n\in\Nat$, $n\geqslant 3$. Then $\tilde{G}(S)$ is a $3$-cycle if and only if $e_1+e_2+e_3=\mathbf{0}$.
\end{proposition}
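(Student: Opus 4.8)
The plan is to translate the algebraic condition $e_1+e_2+e_3=\mathbf{0}$ into the combinatorial structure of $\tilde{G}(S)$ via the dictionary set up in Definition~\ref{seqtogrph}: each vector $e\in C(n-1)$ with its stretch of $1$'s running from entry $i$ to entry $j$ (from the left) corresponds to the edge joining vertices $10^{n-i}$ and $10^{n-j-1}$. Since $\tilde{G}(S)$ has exactly three edges and at most six incident vertices, being a $3$-cycle is equivalent to saying those three edges span exactly three vertices, each vertex lying on exactly two of the edges.

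First I would prove the forward direction. If $\tilde{G}(S)$ is a $3$-cycle on vertices $10^a,10^b,10^c$, then the three edges are, say, $\{10^a,10^b\}$, $\{10^b,10^c\}$, $\{10^a,10^c\}$. Going back through the correspondence, an edge joining $10^p$ and $10^q$ with $p>q$ corresponds to the vector whose $1$'s occupy exactly the coordinate positions strictly between the ``$p$-block'' and the ``$q$-block''; concretely one checks that this vector is $u_p + u_q$, where for a vertex $10^r$ I set $u_r$ to be a fixed vector in $\Int_2^{n-1}$ read off from $f(10^r)$ (the partial-sum / indicator encoding implicit in the definition of $f^\#$, since $f^*(v_iv_j)$ is built from $f(v_i)-f(v_j)$). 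Granting $e(\{10^p,10^q\}) = u_p+u_q$, the three cycle edges give $e_1+e_2+e_3 = (u_a+u_b)+(u_b+u_c)+(u_a+u_c) = \mathbf{0}$ in $\Int_2^{n-1}$, as each $u$ appears twice.

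For the converse, suppose $e_1+e_2+e_3=\mathbf{0}$. Writing $e_t = u_{p_t}+u_{q_t}$ for the endpoint-pairs of the three edges, the hypothesis becomes $u_{p_1}+u_{q_1}+u_{p_2}+u_{q_2}+u_{p_3}+u_{q_3}=\mathbf{0}$. Here I would use that the $u_r$ for distinct vertices $10^r$ are linearly independent — indeed they are essentially the encodings of the $f$-values, and $f$ is injective with the digit structure noted in Definition~2.1 — so a $\Int_2$-linear relation among them forces the multiset $\{p_1,q_1,p_2,q_2,p_3,q_3\}$ to have every element appearing an even number of times. With three edges (six endpoint-slots) and each $e_t$ contributing two \emph{distinct} vertices, the only way to have all multiplicities even is that exactly three distinct vertices occur, each with multiplicity two; moreover no edge can be repeated (the $e_t$ are distinct members of $S$), so the three edges on these three vertices must be precisely the three pairs among them. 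That is exactly a $3$-cycle, and since $\tilde{G}(S)$ was defined with isolated vertices deleted, $\tilde{G}(S)$ equals this $3$-cycle.

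The main obstacle is the clean bookkeeping of the correspondence ``edge $\leftrightarrow$ $u_p+u_q$'' and the linear independence of the vertex-encodings $u_r$; once that lemma-level fact is in hand (it follows from the explicit digit description of $f^*$ and $f^\#$ in Definition~2.1, together with the parenthetical remark there that the stretch of $1$'s in $f^\#(v_iv_j)$ is determined by $(i,j)$), both directions reduce to the elementary observation that in $\Int_2$ a sum of three ``transpositions'' $u_a+u_b, u_b+u_c, u_a+u_c$ vanishes while any other configuration of three distinct edges leaves some $u_r$ with odd multiplicity. I would also handle the trivial degenerate possibilities (two of the edges sharing both endpoints, impossible since the $e_t$ are distinct; an edge being null, impossible since $S\subseteq C(n-1)$) in a single sentence to make the multiplicity argument airtight.
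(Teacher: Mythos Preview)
Your proposal is correct and takes a genuinely different route from the paper. The paper argues both directions by explicit coordinate manipulation: for the forward direction it writes out the three vectors corresponding to a triangle on vertices $10^\alpha,10^\beta,10^\gamma$ (with $\alpha>\beta>\gamma$) and observes by inspection that they sum to $\mathbf{0}$; for the converse it stacks $e_1,e_2,e_3$ as the rows of a matrix, locates the leftmost column containing a $1$, uses the zero row-sum to see that column has exactly two $1$'s, reads off from this the common endpoint of two of the edges, and then computes the third vector as their sum to identify its endpoints. Your approach is more structural: you factor each edge-vector as $u_p+u_q$ (the reduced-incidence, or coboundary, description) and then argue by parity of endpoint multiplicities. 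This buys you an argument that generalises cleanly --- it is essentially already the proof that a set of edges is an edge-disjoint union of cycles iff every vertex has even degree in it --- whereas the paper's hands-on argument is tailored to size three and is then bootstrapped to larger $k$ by induction in Lemma~\ref{kcycle}.

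One point to tighten: you assert that the $u_r$ for the $n$ vertices are linearly independent in $\Int_2^{n-1}$, which is impossible on dimension grounds. With the natural choice $u_r=(\underbrace{0,\ldots,0}_{n-1-r},\underbrace{1,\ldots,1}_{r})$ one has $u_0=\mathbf{0}$, while $u_1,\ldots,u_{n-1}$ are indeed independent (they form a triangular basis). Your parity argument still goes through: the relation $\sum_t(u_{p_t}+u_{q_t})=\mathbf{0}$ forces even multiplicity for every index $r\geqslant 1$, and since the total multiplicity is $6$, the multiplicity of $0$ is automatically even as well. Note also that injectivity of $f$ alone does not yield linear independence of the $u_r$; this step genuinely needs the explicit description of the $u_r$, which is exactly the ``bookkeeping'' you flagged as the main obstacle.
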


\begin{proof}
First suppose that $\tilde{G}(S)$ is the $3$-cycle shown in Figure \ref{fig3}, where $\alpha,\beta,\gamma\in\set{0,1,\ldots,n-1}$. Without loss of generality we assume $\alpha >\beta >\gamma$. Then
$$\begin{array}{rcl}
e_1 & = & (\underbrace{0,0,\ldots,0}_{n-\alpha -1},\underbrace{1,1,\ldots,1}_{\alpha -\beta},\underbrace{0,0,\ldots,0,0,0,\ldots,0}_{\beta})\\
e_2 & = & (\underbrace{0,0,\ldots,0}_{n-\alpha -1},\underbrace{1,1,\ldots,1,1,1,\ldots,1}_{\alpha -\gamma},\underbrace{0,0,\ldots,0}_{\gamma})\\
e_3 & = & (\underbrace{0,0,\ldots,0,0,0,\ldots,0}_{n-\beta -1},\underbrace{1,1,\ldots,1}_{\beta -\gamma},\underbrace{0,0,\ldots,0}_{\gamma})
\end{array}$$
Clearly $e_1+e_2+e_3=\mathbf{0}$.

\begin{figure}[ht]
\begin{center}
\includegraphics*[scale=0.3]{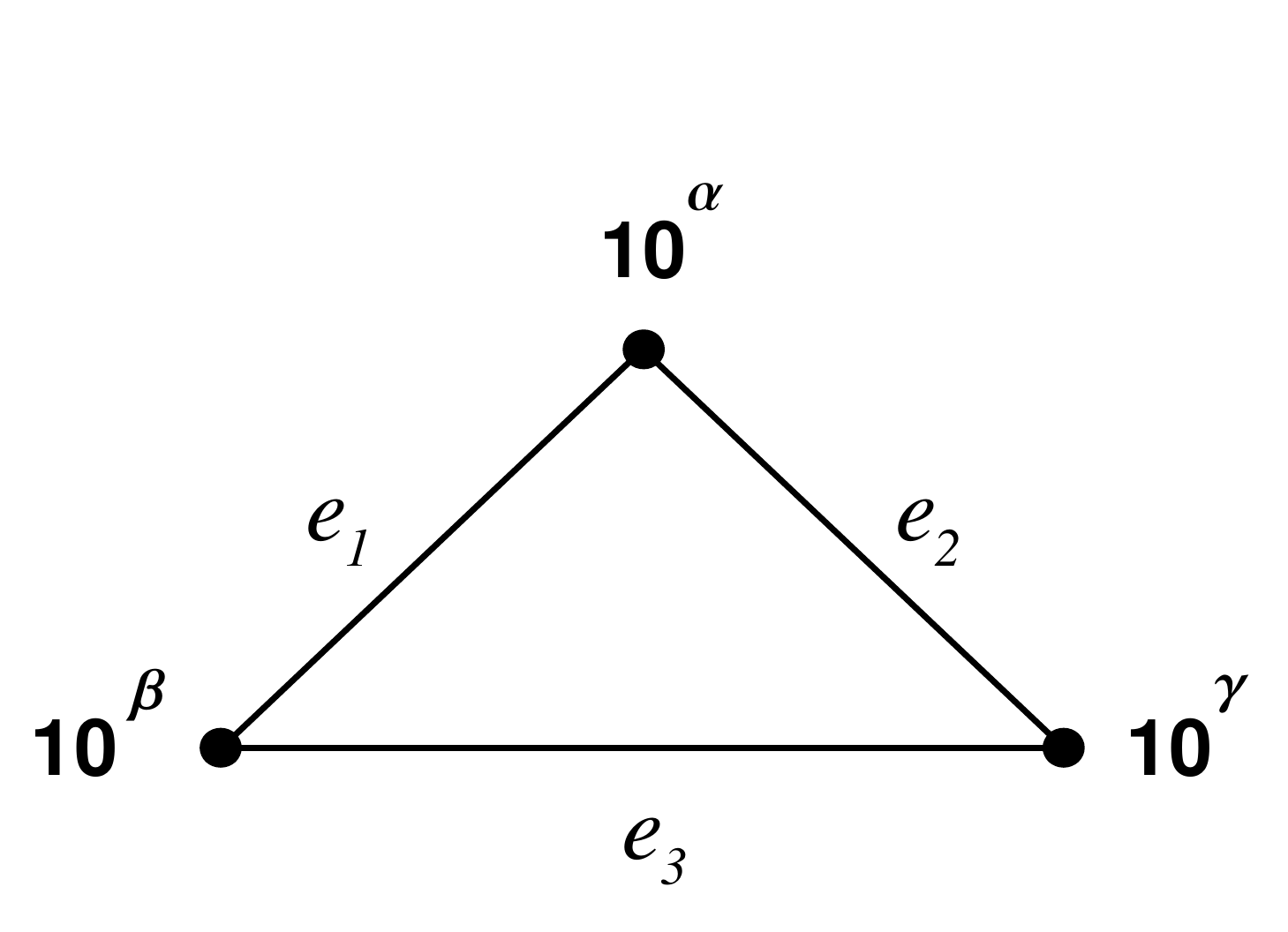}
\caption{A $3$-cycle}\label{fig3}
\end{center}
\end{figure}

\noindent
Conversely, let $e_1+e_2+e_3=\mathbf{0}$. Consider the matrix $M=\left(%
\begin{array}{c}
e_1\\
e_2\\
e_3
\end{array}\right)$, where we represent each $e_i$ as a row matrix consisting of $n-1$ columns for $i=1,2,3$. Since $e_1+e_2+e_3=\mathbf{0}$, in each column where $1$'s appear, they appear exactly in two rows. Let $i$ be the least column number of $M$ that contains $1$. Without loss of generality we assume that $1$'s appear in the first two rows in the $i^\text{th}$ column (otherwise we rearrange rows of $M$). Also suppose that the number of zeros after the stretch of $1$'s in the first row, say, $\beta$ is more than that of the second, say, $\gamma$ (otherwise again we rearrange rows of $M$). Let $\alpha =n-i$. Then the end points of the edge of $\tilde{G}(S)$ corresponding to $e_1$ are $10^\alpha$ and $10^\beta$ and those of the edge corresponding to $e_2$ are $10^\alpha$ and $10^\gamma$. Since $\beta >\gamma$ and $e_1+e_2+e_3=\mathbf{0}$, we have 
$$e_3=e_1+e_2=(\underbrace{0,0,\ldots,0}_{n-\beta -1},\underbrace{1,1,\ldots,1}_{\beta -\gamma},\underbrace{0,0,\ldots,0}_{\gamma}).$$
Thus the end points of the edge of $\tilde{G}(S)$ corresponding to $e_3$ are $10^\beta$ and $10^\gamma$. So the vertices labeled by $10^\alpha,10^\beta$ and $10^\gamma$ form a $3$-cycle with edges corresponding to $e_1,e_2,e_3$, as required.
\end{proof}

\begin{definition}
A set $S\neq\emptyset$ of non-null vectors in $\Int_2^{n-1}$ is called {\em \textbf{reduced}} if $\sum\limits_{e\in A} e \neq\mathbf{0}$ for all $\emptyset\neq A\subsetneqq S$.
\end{definition}

\begin{lemma}\label{kcycle}
Let $S=\set{e_1,e_2,\ldots,e_k}\subseteq C(n-1)$ for some $k,n\in\Nat$, $3\leqslant k\leqslant n$. Then $\tilde{G}(S)$ is a $k$-cycle if and only if $S$ is reduced and $e_1+e_2+\cdots +e_k=\mathbf{0}$.
\end{lemma}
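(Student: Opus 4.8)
The plan is to convert the vector equation $e_1+e_2+\cdots+e_k=\mathbf{0}$ into a statement about the parities of vertex degrees in $\tilde{G}(S)$, and to read the word ``reduced'' as ``no nonempty proper subgraph of $\tilde{G}(S)$ has all degrees even''. The engine I would set up is the following correspondence. Relabel the vertex set $\set{1,10,\ldots,10^{n-1}}$ of $G(S)$ as $\set{0,1,\ldots,n-1}$ via $10^{t}\mapsto n-1-t$ and, for $v\in\set{0,1,\ldots,n-1}$, let $w_v\in\Int_2^{n-1}$ be the vector whose first $v$ coordinates equal $1$ and the rest $0$; thus $w_0=\mathbf{0}$ and $\set{w_1,\ldots,w_{n-1}}$ is a basis of $\Int_2^{n-1}$. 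Checking Definition \ref{seqtogrph} digit by digit, the vector in $C(n-1)$ representing an edge $\set{u,v}$ is precisely $w_u+w_v$. Hence, for any edge set $A$ of the complete graph on these $n$ vertices, $\sum_{e\in A}e=\sum_{v}(\deg_A v)\,w_v=\sum_{v:\,\deg_A v\text{ odd}}w_v$, where $\deg_A v$ is the degree of $v$ in $(V,A)$; since $w_1,\ldots,w_{n-1}$ are independent while $w_0=\mathbf{0}$ (and the number of odd-degree vertices is even), $\sum_{e\in A}e=\mathbf{0}$ if and only if every vertex has even degree in $(V,A)$. This already re-proves one direction of Proposition \ref{3cycle}.

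With the engine in hand the forward implication is immediate. If $\tilde{G}(S)$ is a $k$-cycle then every vertex has degree $2$, so $e_1+\cdots+e_k=\mathbf{0}$. If $\emptyset\neq A\subsetneq S$, then the edges of $A$ span a nonempty proper subgraph of a $k$-cycle, i.e.\ a disjoint union of one or more nontrivial paths; such a graph has a vertex of degree $1$, so $\sum_{e\in A}e\neq\mathbf{0}$. Hence $S$ is reduced.

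Conversely, assume $S$ is reduced and $e_1+\cdots+e_k=\mathbf{0}$. By the engine every vertex of $\tilde{G}(S)$ has even degree, and since $\tilde{G}(S)$ has no isolated vertices every vertex has degree $\geqslant 2$. If $\tilde{G}(S)$ were disconnected, the edge set of one of its components would be a nonempty proper subset of $S$ with all degrees even, hence would sum to $\mathbf{0}$, contradicting reducedness; so $\tilde{G}(S)$ is connected. Suppose finally that some vertex had degree $\geqslant 4$. An even graph with at least one edge contains a cycle (walk out of a vertex, never getting stuck, until a vertex repeats); let $C\subseteq S$ be the edge set of such a cycle. Then $\sum_{e\in C}e=\mathbf{0}$ with $C\neq\emptyset$, so reducedness forces $C=S$; but then $\tilde{G}(S)$ is itself a cycle and all its vertices have degree $2$, contradicting the existence of a vertex of degree $\geqslant 4$. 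Hence $\tilde{G}(S)$ is connected and $2$-regular, so it is a single cycle; having $|S|=k$ edges, it is a $k$-cycle.

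The step I expect to be the main obstacle is the verification that, under the relabeling $10^{t}\mapsto n-1-t$, the edge $\set{u,v}$ corresponds to $w_u+w_v$ — i.e.\ matching the rule ``the $(n-i)$th digit from the right is $9$'' with the interval of $1$'s and with the endpoints $10^{n-i},10^{n-j-1}$ named in Definition \ref{seqtogrph}. After that the argument uses only the parity engine together with three standard facts: a nonempty proper subgraph of a cycle is a union of paths, an even graph with an edge contains a cycle, and a connected $2$-regular graph is a cycle. As an alternative to the degree-$\geqslant 4$ analysis one may instead delete $e_1=w_p+w_q$ from $S$: the remaining set has no nonempty subset summing to $\mathbf{0}$ (otherwise $S$ would not be reduced), so it is linearly independent, hence a forest, and by the engine its only odd-degree vertices are $p$ and $q$; a forest with exactly two odd-degree vertices is a path from $p$ to $q$ plus isolated vertices, and restoring the edge $\set{p,q}$ closes this path into a $k$-cycle.
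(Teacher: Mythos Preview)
Your proof is correct and takes a genuinely different route from the paper. The paper argues by induction on $k$: given a $k$-cycle it draws the chord $e=e_1+e_2$ through two consecutive edges, obtaining a triangle (handled by Proposition~\ref{3cycle}) and a $(k-1)$-cycle $\{e,e_3,\ldots,e_k\}$; for the converse it finds two edges sharing an endpoint by looking at the leftmost column of $1$'s, replaces them by their sum, checks that the shorter set is still reduced, and applies the induction hypothesis. Your argument instead sets up once and for all the linear map $10^{t}\mapsto w_{\,n-1-t}$ so that each edge $\{u,v\}$ becomes $w_u+w_v$; this turns the sum $\sum_{e\in A}e$ into the parity vector of the degree sequence and reduces the lemma to the standard graph-theoretic facts that a proper nonempty edge-subset of a cycle has a degree-$1$ vertex, an even graph with an edge contains a cycle, and a connected $2$-regular graph is a cycle. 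The paper's inductive chord trick stays closer to the explicit coordinate description and reuses Proposition~\ref{3cycle}, while your parity engine is more conceptual: it identifies the coding sequence with the incidence matrix after the change of basis $w_v\mapsto$ (standard $v$th vector), and the same one-line computation immediately yields Corollaries~\ref{acyclic}, \ref{eul} and \ref{bip} without further case analysis. One minor streamlining: once you know every vertex has even degree $\geqslant 2$, you can skip the degree-$\geqslant 4$ case split --- any cycle $C\subseteq S$ already has $\sum_{e\in C}e=\mathbf{0}$, so reducedness forces $C=S$ directly.
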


\begin{proof}
We prove by induction on $k$. By Proposition \ref{3cycle}, the result is true for $k=3$. Suppose the result is true for $k=r-1\geqslant 3$. Let $S=\set{e_1,e_2,\ldots,e_r}\subseteq C(n-1)$ for some $r,n\in\Nat$, $3< r\leqslant n$ form the $r$-cycle shown in Figure \ref{fig4} (we renumber $e_i$'s, if necessary). Consider the chord $e$ so that $e_1,e_2$ and $e$ form a triangle. Then $e_1+e_2+e=\mathbf{0}$ by Proposition \ref{3cycle}. So $e=e_1+e_2$. Also $\set{e,e_3,e_4,\ldots,e_r}$ form an $(r-1)$-cycle. So by induction hypothesis $e+e_3+e_4+\cdots+e_r=\mathbf{0}$ which implies $e_1+e_2+e_3+e_4+\cdots+e_r=\mathbf{0}$. Moreover since $S$ is a cycle, no proper subset of $S$ form a cycle. Thus $S$ is reduced by induction hypothesis.

\begin{figure}[ht]
\begin{center}
\includegraphics*[scale=0.35]{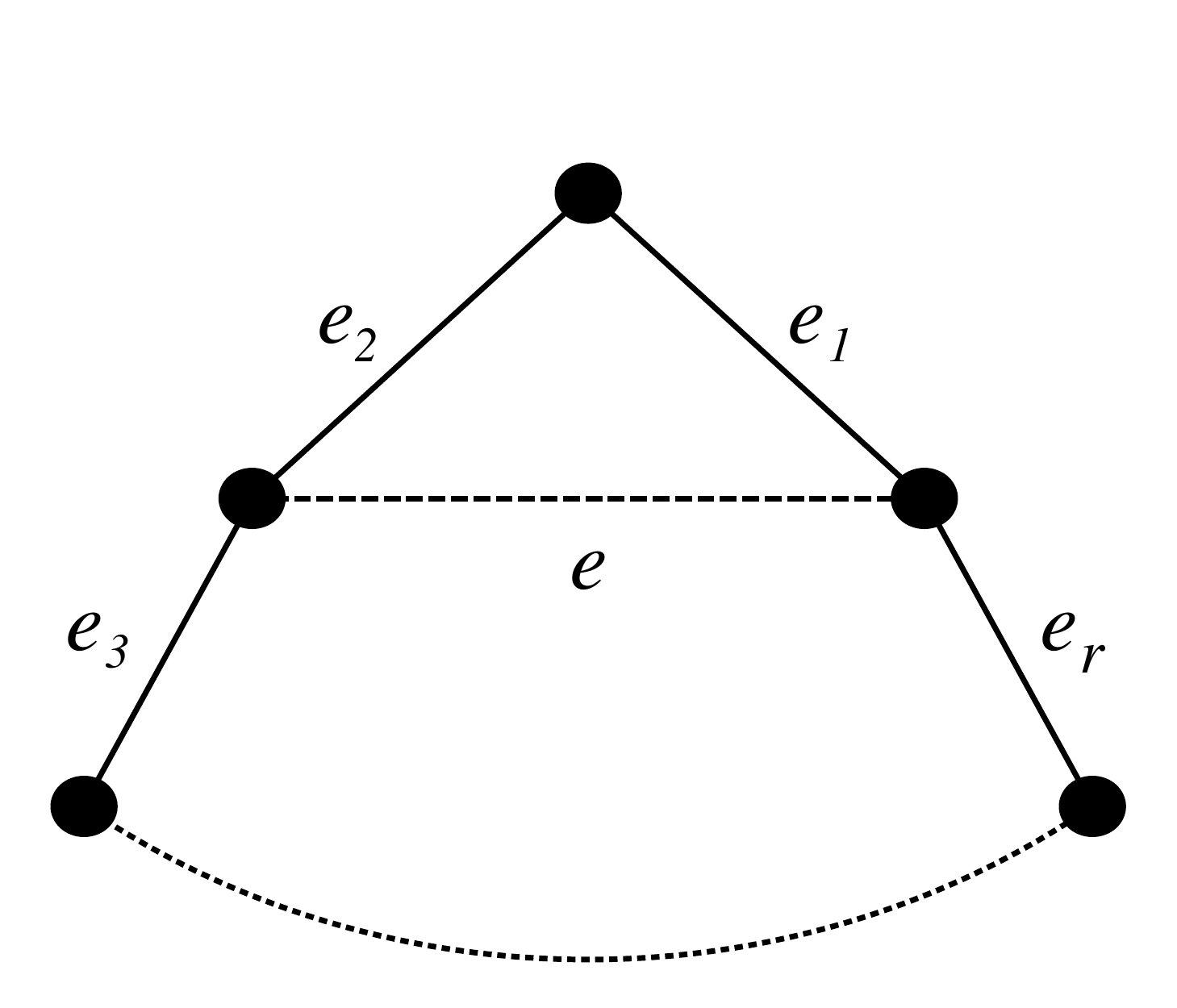}
\caption{An $r$-cycle}\label{fig4}
\end{center}
\end{figure}

\noindent
Conversely, let $S=\set{e_1,e_2,\ldots,e_r}\subseteq C(n-1)$ for some $r,n\in\Nat$, $3< r\leqslant n$ be reduced and $e_1+e_2+\cdots+e_r=\mathbf{0}$. Consider the matrix $M=\left(%
\begin{array}{c}
e_1\\
e_2\\
\cdots\\
e_r
\end{array}\right)$, where we represent each $e_i$ as a row matrix consisting of $n-1$ columns for $i=1,2,\ldots,r$. Let $i$ be the least column number of $M$ that contains $1$. Since the row sum of $M$ is zero (the null vector), the $i^\text{th}$ column contains even number of $1$'s. So there are at least two rows with $1$ in the $i^\text{th}$ column. Rearrange rows of $M$ such that $e_1$ and $e_2$ be two such rows. Since both of these rows begin with $1$ in the $i^\text{th}$ column, the edges corresponding to them have a common end point with label $10^{n-i}$. Join the other end points of $e_1$ and $e_2$ by an edge, say, $e$ to form a triangle with edges $e_1,e_2,e$. Then $e_1+e_2+e=\mathbf{0}$ which implies $e=e_1+e_2$. So $e+e_3+e_4+\cdots+e_r=\mathbf{0}$. 

\vspace{0.5em}\noindent
We claim that $S_1=\set{e,e_3,e_4,\ldots,e_r}$ is reduced. Suppose $A\subsetneqq S_1$, $|A|\geqslant 3$ be such that $a=\sum\limits_{x\in A} x=\mathbf{0}$. If $e\notin A$, then $a\neq 0$ as $S$ is reduced. So $e\in A$. Then replacing $e$ by $e_1+e_2$ in $a$ would again contradict the fact that $S$ is reduced. So $S_1$ is reduced. Hence by induction hypothesis, $S_1$ form an $(r-1)$-cycle. Now replacing the edge $e$ by the path consisting of edges $e_1$ and $e_2$ gives us an $r$-cycle formed by $S$.
\end{proof}

\noindent
The following two corollaries follow immediately from the above lemma.

\begin{corollary}\label{ham}
A graph $G$ with $n\geqslant 3$ vertices $(n\in\Nat)$ is Hamiltonian if and only if for any coding sequence $\beta(G,n)$ of $G$, there exists $S=\set{e_1,e_2,\ldots,e_n}\subseteq\beta(G,n)$ such that $S$ is reduced and $e_1+e_2+\cdots+e_n=\mathbf{0}$.
\end{corollary}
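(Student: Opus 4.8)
The plan is to derive this directly from Lemma~\ref{kcycle} with $k=n$, using the elementary graph-theoretic fact that a graph on $n$ vertices is Hamiltonian precisely when it contains an $n$-cycle as a subgraph. Throughout, fix a coding sequence $\beta(G,n)$ and recall from Definition~\ref{seqtogrph} that the members of $\beta(G,n)$ are in bijection with the edges of $G$ (after relabelling the vertex set as $\set{1,10,\ldots,10^{n-1}}$), and that for $\emptyset\neq S\subseteq\beta(G,n)$ the graph $\tilde{G}(S)$ is exactly the subgraph of $G$ spanned by the edges coded by the vectors of $S$, with isolated vertices deleted. Note also that since $\beta(G,n)$ is a sequence of distinct vectors, distinct edges of $G$ give distinct vectors, so any set of edges corresponds to a genuine subset of $\beta(G,n)$ of the same cardinality.

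For the forward direction, suppose $G$ is Hamiltonian and let $C$ be a Hamiltonian cycle of $G$. Then $C$ has exactly $n$ edges, which correspond to $n$ distinct vectors $e_1,\ldots,e_n\in\beta(G,n)$; put $S=\set{e_1,e_2,\ldots,e_n}$. Since $C$ visits every vertex, the graph $G(S)$ has no isolated vertex, so $\tilde{G}(S)=G(S)=C$ is an $n$-cycle. As $n\geqslant 3$, Lemma~\ref{kcycle} applies with $k=n$ and yields that $S$ is reduced and $e_1+e_2+\cdots+e_n=\mathbf{0}$, as claimed.

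For the converse, suppose some $S=\set{e_1,e_2,\ldots,e_n}\subseteq\beta(G,n)$ is reduced with $e_1+e_2+\cdots+e_n=\mathbf{0}$. Again by Lemma~\ref{kcycle} (with $k=n$), $\tilde{G}(S)$ is an $n$-cycle. But $\tilde{G}(S)$ is a subgraph of $G$ on $n$ vertices, while $G$ itself has only $n$ vertices, so $\tilde{G}(S)$ must span $V(G)$; hence $\tilde{G}(S)$ is a Hamiltonian cycle of $G$ and $G$ is Hamiltonian.

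The argument is essentially bookkeeping, so I expect no real obstacle. The only points that merit a moment's care are: in the forward direction, verifying that passing from the Hamiltonian cycle to $\tilde{G}(S)$ deletes no vertices (so that Lemma~\ref{kcycle} applies verbatim to the full vertex set), and in the converse, the observation that an $n$-cycle subgraph of an $n$-vertex graph is automatically spanning. Both are immediate consequences of $|V(G)|=n$.
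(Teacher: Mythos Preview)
Your proof is correct and is exactly the argument the paper has in mind: the paper simply states that the corollary follows immediately from Lemma~\ref{kcycle}, and your write-up spells out that immediate deduction (taking $k=n$ and noting that an $n$-cycle in an $n$-vertex graph is Hamiltonian). There is nothing to add or change.
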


\begin{corollary}\label{acyclic}
A graph $G$ with $n$ vertices $(n\in\Nat)$ is acyclic if and only if any coding sequence $\beta(G,n)$ of $G$ is linearly independent over $\Int_2$.
\end{corollary}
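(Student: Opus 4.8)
The plan is to read the statement off Lemma \ref{kcycle} together with the characterization of linear dependence recorded in the paragraph preceding Proposition \ref{3cycle}: a set $S$ of distinct non-null vectors in $\Int_2^{n-1}$ is linearly dependent over $\Int_2$ if and only if there is some $A\subseteq S$ with $|A|\geqslant 3$ and $\sum_{e\in A}e=\mathbf{0}$. Since the vectors of a coding sequence lie in $C(n-1)$ and are pairwise distinct and non-null, this criterion applies verbatim to $\beta(G,n)$.

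For the ``only if'' part, suppose $G$ is not acyclic, so $G$ has a cycle of some length $k$ with $3\leqslant k\leqslant n$. Fix any coding sequence $\beta(G,n)$ and let $S=\set{e_1,\ldots,e_k}\subseteq\beta(G,n)$ be the set of vectors coding the $k$ edges of that cycle. Since $G(\beta(G,n))$ is the relabeled copy of $G$ in which each $v_i$ becomes $10^i$ (Definition \ref{seqtogrph}), the induced subgraph $\tilde{G}(S)$ is exactly that $k$-cycle; Lemma \ref{kcycle} then gives $e_1+\cdots+e_k=\mathbf{0}$, and as $k\geqslant 3$ this is a nontrivial dependence, so $\beta(G,n)$ is linearly dependent over $\Int_2$.

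For the ``if'' part I argue contrapositively. Suppose some coding sequence $\beta(G,n)$ is linearly dependent, and choose $A\subseteq\beta(G,n)$ of \emph{minimal} cardinality among subsets with $|A|\geqslant 3$ and $\sum_{e\in A}e=\mathbf{0}$. Minimality forces $A$ to be reduced: no nonempty subset of size $1$ or $2$ can sum to $\mathbf{0}$ (the vectors are non-null and distinct), and no proper subset of size $\geqslant 3$ can, by minimality of $A$. Moreover a reduced zero-sum set is a minimal linearly dependent set — every proper subset is linearly independent, by the dependence criterion together with reducedness — hence a circuit of a matroid of rank at most $n-1$, so $|A|\leqslant n$. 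Now Lemma \ref{kcycle} applies and shows that $\tilde{G}(A)$ is an $|A|$-cycle; since $\tilde{G}(A)$ is a subgraph of the copy $G(\beta(G,n))$ of $G$, the graph $G$ contains a cycle and is not acyclic.

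There is no genuine obstacle here: the content is entirely carried by Lemma \ref{kcycle}. The only step that needs a moment's care is converting ``linearly dependent'' into ``has a reduced zero-sum subset of size between $3$ and $n$,'' which is precisely the minimality argument (giving reducedness) plus the circuit-size bound $|A|\leqslant(n-1)+1$; everything else is routine translation through the dictionary between vectors in $C(n-1)$ and edges of an $n$-vertex graph.
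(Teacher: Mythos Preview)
Your proof is correct and takes essentially the same approach as the paper, which simply records the corollary as following immediately from Lemma \ref{kcycle}. You have merely made explicit the two routine points the paper leaves tacit: that a minimal zero-sum subset is reduced, and that such a circuit in $\Int_2^{n-1}$ has size at most $n$ so that Lemma \ref{kcycle} applies.
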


\begin{corollary}\label{eul}
A graph $G$ with at most one non-trivial component and with $n$ vertices $(n\in\Nat)$ is Eulerian if and only if $\displaystyle{\sum\limits_{e\in \beta(G,n)} e =\mathbf{0}}$ for any coding sequence $\beta(G,n)$ of $G$.
\end{corollary}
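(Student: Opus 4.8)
The plan is to separate two independent facts. The first is the classical Euler theorem: a graph with at most one non-trivial component is Eulerian if and only if each of its vertices has even degree. The second, which carries the new content, is the purely linear-algebraic assertion that for \emph{any} simple graph $G$ on $n$ vertices and \emph{any} coding sequence $\beta(G,n)$ one has $\sum_{e\in\beta(G,n)} e=\mathbf{0}$ in $\Int_2^{n-1}$ exactly when every vertex of $G$ has even degree. Note that this second statement requires no connectivity hypothesis at all; the assumption ``at most one non-trivial component'' enters only when invoking Euler's theorem. So it suffices to prove the second statement, and both of its directions will be obtained from Lemma \ref{kcycle}.

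For the implication ``all degrees even $\Rightarrow\sum e=\mathbf{0}$'' I would argue by induction on the number $m$ of edges (equivalently, quote Veblen's theorem that an even graph decomposes into edge-disjoint cycles). If $m=0$ the sum is empty and equals $\mathbf{0}$. If $m\geqslant 1$, then, all degrees being even, the subgraph spanned by the edges has a component of minimum degree $\geqslant 2$ and hence contains a cycle $C$; the edge-vectors of $C$ form a subset of $\beta(G,n)$ which, by Lemma \ref{kcycle}, sums to $\mathbf{0}$. Deleting the edges of $C$ changes every vertex degree by $0$ or $2$, so the resulting graph is still even and has fewer edges; applying the induction hypothesis to it and adding back the (zero) contribution of $C$ yields $\sum_{e\in\beta(G,n)} e=\mathbf{0}$.

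For the converse, assume $\sum_{e\in\beta(G,n)} e=\mathbf{0}$ and again induct on $m$; the case $m=0$ is immediate. If $m\geqslant 1$, this relation (which, since no one or two distinct vectors of $C(n-1)$ can sum to $\mathbf{0}$, forces $m\geqslant 3$) exhibits $\beta(G,n)$ as a linearly dependent set, so by Corollary \ref{acyclic} the graph $G$ contains a cycle $C$. Let $S\subseteq\beta(G,n)$ be the set of edge-vectors of $C$; by Lemma \ref{kcycle}, $\sum_{e\in S} e=\mathbf{0}$, whence $\sum_{e\in\beta(G,n)\smallsetminus S} e=\mathbf{0}$ as well. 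Since $\beta(G,n)\smallsetminus S$ is precisely a coding sequence of $G$ with the edges of $C$ removed, the induction hypothesis makes all degrees of that graph even, and deleting the cycle $C$ altered each degree by $0$ or $2$, so every degree of $G$ is even. Together with Euler's theorem this proves the corollary; the degenerate possibility $E=\emptyset$ (and the convention that the null graph counts as Eulerian) is dispatched in a line.

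The one step that needs a moment's care is the application of Lemma \ref{kcycle}: one must check its hypotheses $3\leqslant k\leqslant n$ for the chosen cycle $C$. This is automatic, since $C$ is a genuine cycle subgraph of the $n$-vertex graph $G$, so it has at least $3$ and at most $n$ edges, and its identification with the corresponding subset of $C(n-1)$ is exactly the one fixed in Definition \ref{seqtogrph}; everything else is bookkeeping on parities of vertex degrees. As an alternative to the inductive argument one may compute $\sum_{e\in\beta(G,n)} e$ directly: its $g$-th coordinate counts, modulo $2$, the edges joining $\set{v_0,\dots,v_{g-1}}$ to $\set{v_g,\dots,v_{n-1}}$, which is congruent modulo $2$ to $\deg v_0+\cdots+\deg v_{g-1}$, so the sum vanishes iff all these partial sums are even, i.e.\ iff every vertex degree is even.
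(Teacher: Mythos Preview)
Your argument is correct and its core is the same as the paper's: decompose the edge set into edge--disjoint cycles and apply Lemma~\ref{kcycle} to each. The paper compresses this to a single line (``Follows from Lemma~\ref{kcycle} and the fact that a circuit in a graph can be decomposed into edge-disjoint cycles''), which really only addresses the forward direction; you supply the missing converse by an induction that peels off a cycle found via Corollary~\ref{acyclic}, and you route the graph-theoretic side through ``all degrees even'' rather than through the existence of an Euler circuit directly. That is a cleaner separation of concerns than the paper gives.

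Your closing alternative, computing the $g$-th coordinate of $\sum_{e}e$ as the size (mod~$2$) of the edge-cut between an initial segment of the vertices and its complement, is a genuinely different and more elementary proof: it bypasses Lemma~\ref{kcycle} entirely and reads off the parity condition from the cut/degree identity. One small indexing caveat: with the paper's convention the $g$-th coordinate (from the left) records the cut between $\set{v_0,\ldots,v_{n-g-1}}$ and $\set{v_{n-g},\ldots,v_{n-1}}$, not between $\set{v_0,\ldots,v_{g-1}}$ and its complement; this does not affect the argument, since either way you obtain that all prefix degree sums are even, hence each individual degree is even.
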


\begin{proof}
Follows from Lemma \ref{kcycle} and the fact that a circuit in a graph can be decomposed into edge-disjoint cycles.
\end{proof}

\begin{corollary}\label{bip}
A graph $G$ with $n\geqslant 2$ vertices $(n\in\Nat)$ is bipartite if and only if for any coding sequence $\beta(G,n)$ of $G$, $\displaystyle{\sum\limits_{e\in S} e \neq\mathbf{0}}$ for every $S\subseteq \beta(G,n)$ where $|S|$ is odd.
\end{corollary}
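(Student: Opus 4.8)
The plan is to translate bipartiteness into the classical statement that a graph is bipartite if and only if it contains no odd cycle, and then to convert ``odd cycle'' into a condition on vectors in $C(n-1)$ via Lemma \ref{kcycle}. The key technical bridge is the observation, already used in the proof of Corollary \ref{eul}, that any edge subset whose induced subgraph has all degrees even (in particular any cycle, or any edge-disjoint union of cycles) decomposes into edge-disjoint cycles; the parity of such a decomposition is what we must track.

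First I would prove the forward direction by contraposition: suppose some $S\subseteq\beta(G,n)$ with $|S|$ odd satisfies $\sum_{e\in S}e=\mathbf{0}$. Choose such an $S$ of minimum size. Then $S$ is reduced, since any proper nonempty $A\subsetneqq S$ with $\sum_{e\in A}e=\mathbf{0}$ would be a smaller such set if $|A|$ is odd, and if $|A|$ is even then $S\smallsetminus A$ is a smaller odd set with $\sum_{e\in S\smallsetminus A}e=\mathbf 0$ (note $|S|\geqslant 3$ since two distinct nonzero vectors cannot sum to $\mathbf 0$, so this case split is exhaustive). By Lemma \ref{kcycle}, $\tilde G(S)$ is a $|S|$-cycle, hence an odd cycle in $G$, so $G$ is not bipartite.

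For the converse, again by contraposition: if $G$ is not bipartite it contains an odd cycle, say on edges $e_1,\ldots,e_k$ with $k$ odd, $k\geqslant 3$. The corresponding vectors form a subset $S=\set{e_1,\ldots,e_k}$ of $\beta(G,n)$ (identifying edges with their codes), and by Lemma \ref{kcycle} (the easy direction) this cycle gives $e_1+\cdots+e_k=\mathbf{0}$ with $|S|=k$ odd. Hence the displayed condition fails. One should note the edge case $n=2$: here $\beta(G,2)\subseteq C(1)=\set{(1)}$, so $G$ is an edge or a single vertex, always bipartite, and indeed no odd-sized $S$ sums to $\mathbf 0$ since the only nonempty $S$ is $\set{(1)}$.

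The main obstacle, and the only point requiring genuine care, is the minimality argument in the forward direction: one must be sure that a minimum odd-size vanishing subset is actually \emph{reduced} (so that Lemma \ref{kcycle} applies and yields a genuine cycle rather than merely an even-degree subgraph), and the subtlety is that cancelling a proper vanishing subset may have either parity, so the argument has to handle both the ``remove an odd piece'' and ``remove an even piece, keep the odd complement'' cases. Everything else is a direct appeal to Lemma \ref{kcycle} and the standard characterization of bipartite graphs by absence of odd cycles.
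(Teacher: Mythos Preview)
Your proof is correct and follows the same route as the paper: reduce to the standard ``bipartite $\Leftrightarrow$ no odd cycle'' criterion and invoke Lemma \ref{kcycle}. The paper's own proof is a one-liner (``Follows from Lemma \ref{kcycle} and the fact that a graph is bipartite if and only if it does not contain any odd cycle''), so your minimality argument---showing that a smallest odd vanishing subset must be reduced by splitting off an odd or an even proper vanishing piece---actually supplies a detail the paper leaves implicit, since Lemma \ref{kcycle} only produces a cycle from a \emph{reduced} vanishing set.
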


\begin{proof}
Follows from Lemma \ref{kcycle} and the fact that a graph is bipartite if and only if it does not contain any odd cycle.
\end{proof}

\begin{theorem}\label{tree}
A graph $G$ with $n$ vertices $(n\in\Nat)$ is a tree if and only if any coding sequence $\beta(G,n)$ of $G$ is a basis of the vector space $\Int_2^{n-1}$ over the field $\Int_2$.
\end{theorem}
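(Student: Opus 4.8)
The plan is to reduce the whole statement to Corollary \ref{acyclic} together with two elementary facts: (i) a subset of the $(n-1)$-dimensional space $\Int_2^{n-1}$ is a basis if and only if it is linearly independent and has cardinality $n-1$; and (ii) among graphs on $n$ vertices, the trees are exactly the acyclic graphs with $n-1$ edges (equivalently, a forest with $c$ components has $n-c$ edges). So essentially no new idea is needed beyond combining the earlier corollary with a dimension count.

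First I would record the bookkeeping. By Definition \ref{seqtogrph} the assignment $e\mapsto f^\#(e)$ is a bijection from $E$ onto $\beta(G,n)$: distinct edges $v_iv_j$ produce distinct consecutive blocks of $1$'s, so $|\beta(G,n)|=m$, the number of edges of $G$. I would also point out that both the cardinality of $\beta(G,n)$ and, by Corollary \ref{acyclic}, its linear independence over $\Int_2$ do not depend on the particular vertex labeling $f$; hence in the statement ``any coding sequence'' may freely be replaced by ``some coding sequence'' without changing anything.

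For the forward direction, assume $G$ is a tree. Then $G$ is acyclic, so Corollary \ref{acyclic} gives that $\beta(G,n)$ is linearly independent over $\Int_2$; moreover $G$ has $m=n-1$ edges, so $|\beta(G,n)|=n-1=\dim_{\Int_2}\Int_2^{n-1}$. A linearly independent subset of a vector space whose size equals the dimension of that space is a basis, so $\beta(G,n)$ is a basis of $\Int_2^{n-1}$. (In the degenerate case $n=1$ this simply says that $\emptyset$ is the basis of $\Int_2^0=\set{\mathbf{0}}$, consistent with the null graph.) Conversely, assume some $\beta(G,n)$ is a basis of $\Int_2^{n-1}$. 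Then it is linearly independent, so $G$ is acyclic by Corollary \ref{acyclic}; and $|\beta(G,n)|=n-1$, so $G$ has exactly $n-1$ edges. An acyclic graph on $n$ vertices with $n-1$ edges is connected (a forest with $c$ components has $n-c$ edges, forcing $c=1$), hence is a tree.

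The hard part, such as it is, is purely a matter of care rather than difficulty: one must justify that $|\beta(G,n)|$ equals the number of edges of $G$, and that ``any coding sequence'' is unambiguous because acyclicity and cardinality are labeling-independent. Both follow at once from Definition \ref{seqtogrph} and Corollary \ref{acyclic}, after which the theorem is immediate from the dimension count.
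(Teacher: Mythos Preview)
Your proposal is correct and follows essentially the same approach as the paper: both directions reduce to Corollary~\ref{acyclic} (acyclic $\Leftrightarrow$ linearly independent) plus the dimension/edge count $|\beta(G,n)|=n-1=\dim\Int_2^{n-1}$. Your extra remarks on $|\beta(G,n)|=m$, labeling-independence, and the $n=1$ edge case are harmless elaborations, but the core argument is identical to the paper's.
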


\begin{proof}
Suppose $G$ is a tree. Then $G$ is acyclic which implies $\beta(G,n)$ is linearly independent over $\Int_2$ by Corollary \ref{acyclic}. Again since $G$ is a tree, the number of entries in $\beta(G,n)$ is $n-1$, we have $n-1$ linearly independent vectors in $\Int_2^{n-1}$ over $\Int_2$. Thus $\beta(G,n)$ is a basis of $\Int_2^{n-1}$ over $\Int_2$.

\vspace{0.5em}\noindent
Conversely, suppose $\beta(G,n)$ is a basis of $\Int_2^{n-1}$ over $\Int_2$. Then $\beta(G,n)$ is linearly independent over $\Int_2$ and so $G$ is acyclic by Corollary \ref{acyclic}. Also since $\beta(G,n)$ is a basis of $\Int_2^{n-1}$ over $\Int_2$, the number of entries in $\beta(G,n)$ is $n-1$ which implies $G$ has $n-1$ edges. Thus $G$ is a tree.
\end{proof}

\begin{corollary}\label{connected}
A graph $G$ with $n$ vertices $(n\in\Nat)$ is connected if and only if for any coding sequence $\beta(G,n)$ of $G$, $\text{{\em Sp}}\, (\beta(G,n))=\Int_2^{n-1}$.
\end{corollary}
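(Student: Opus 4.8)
The plan is to squeeze $\text{Sp}\,(\beta(G,n))$ between the span of the coding sequence of a maximal spanning forest of $G$ and all of $\Int_2^{n-1}$, and then to read off its dimension from the number of edges of that forest. So fix a coding sequence $\beta(G,n)$, coming from some labelling $f$ of the $n$ vertices, and suppose first that $G$ is connected. Pick a spanning tree $T$ of $G$. Keeping the labelling $f$, the set $\set{f^\#(e)\ \vert\ e\in E(T)}$ is itself a coding sequence of the tree $T$ and it is a subset of $\beta(G,n)$; since $T$ is a tree on $n$ vertices, Theorem \ref{tree} says this set is a basis of $\Int_2^{n-1}$. Hence $\text{Sp}\,(\beta(G,n))\supseteq\Int_2^{n-1}$, and equality follows.

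For the converse I argue by contraposition. Suppose $G$ is disconnected, with $c\geqslant 2$ connected components, and let $T$ be a maximal spanning forest of $G$ (a spanning tree in each component), so $T$ is acyclic with exactly $n-c$ edges. Retaining the labelling $f$, the set $\beta(T,n):=\set{f^\#(e)\ \vert\ e\in E(T)}$ is a coding sequence of the acyclic graph $T$, hence linearly independent over $\Int_2$ by Corollary \ref{acyclic}; thus $\dim\text{Sp}\,(\beta(T,n))=n-c\leqslant n-2$. The key step is that $\beta(G,n)\subseteq\text{Sp}\,(\beta(T,n))$: for an edge $e=uv\in E(G)\smallsetminus E(T)$, maximality of $T$ forces $u$ and $v$ into the same component, so $T$ contains a unique $u$--$v$ path $P$; the edges of $P$ together with $e$ form a cycle of the simple graph $G$, necessarily of length $k$ with $3\leqslant k\leqslant n$, and the subgraph they induce is exactly that $k$-cycle. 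By Lemma \ref{kcycle} their coding vectors sum to $\mathbf{0}$, so $f^\#(e)=\sum_{e'\in E(P)}f^\#(e')\in\text{Sp}\,(\beta(T,n))$. Therefore $\text{Sp}\,(\beta(G,n))=\text{Sp}\,(\beta(T,n))$ has dimension at most $n-2$, so it cannot be all of $\Int_2^{n-1}$.

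The only thing that needs care is the bookkeeping: one must check that ``the coding sequence of the subgraph $T$ with the inherited labelling'' is legitimately a coding sequence (it is, being the one attached to that particular labelling of $T$), and that the fundamental cycle created by $e$ meets the hypotheses of Lemma \ref{kcycle}, i.e.\ it is a genuine $k$-cycle with $3\leqslant k\leqslant n$ — which holds because $G$ is simple (no loops or parallel edges) and a cycle has at most $n$ vertices. Everything else is routine linear algebra. In fact the two directions can be merged: for an arbitrary graph $G$ the same reasoning gives $\text{Sp}\,(\beta(G,n))=\text{Sp}\,(\beta(T,n))$ for a maximal forest $T$, whose dimension is $n-c(G)$, and this equals $n-1$ exactly when $c(G)=1$, i.e.\ exactly when $G$ is connected.
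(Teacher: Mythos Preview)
Your forward direction is exactly the paper's: pass to a spanning tree and invoke Theorem~\ref{tree}. Your converse, however, takes a genuinely different route. The paper argues directly: if $\text{Sp}\,(\beta(G,n))=\Int_2^{n-1}$, then $\beta(G,n)$ contains a basis $B$ of $\Int_2^{n-1}$, and by Theorem~\ref{tree} the subgraph $G(B)$ is a tree on all $n$ vertices, i.e.\ a spanning tree of $G$, so $G$ is connected. That is two lines and uses nothing beyond Theorem~\ref{tree}. You instead work contrapositively, building a maximal spanning forest $T$, invoking Corollary~\ref{acyclic} for independence, and then appealing to Lemma~\ref{kcycle} on fundamental cycles to show every edge vector of $G$ lies in $\text{Sp}\,(\beta(T,n))$. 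This is correct and the bookkeeping you flag (inherited labelling, $3\leqslant k\leqslant n$) is handled properly. What your approach buys is the sharper identity $\dim\text{Sp}\,(\beta(G,n))=n-c(G)$ for any $G$, which the paper's argument does not isolate; what the paper's approach buys is brevity, since Theorem~\ref{tree} already encapsulates the cycle analysis and one need not reopen Lemma~\ref{kcycle}.
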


\begin{proof}
We first note that a graph $G$ is connected if and only if $G$ has a spanning tree. Suppose $G=(V,E)$ has a spanning tree $H=(V,E_1)$. Then $\beta(H,n)\subseteq\beta(G,n)$ with the same vertex labeling. But $\beta(H,n)$ is a basis of $\Int_2^{n-1}$ over $\Int_2$ by Theorem \ref{tree}. Thus $\text{Sp}\, (\beta(G,n))\supseteq\text{Sp}\, (\beta(H,n))=\Int_2^{n-1}$. So $\text{Sp}\, (\beta(G,n))=\Int_2^{n-1}$.

\vspace{0.5em}\noindent
Conversely, if $\text{Sp}\, (\beta(G,n))=\Int_2^{n-1}$, then $\beta(G,n)$ contains a basis, say $B$ of $\Int_2^{n-1}$ over $\Int_2$. Then $G(B)$ is a spanning tree of $G$ by Theorem \ref{tree} as $B=\beta(G(B),n)$. Thus $G$ is connected.
\end{proof}

\begin{corollary}\label{spantree}
Let $G$ be a connected graph with $n$ vertices and $S\subseteq \beta(G,n)$. Then $G(S)$ is a spanning tree of $G$ if and only if $S$ is a basis of the vector space $\Int_2^{n-1}$ over the field $\Int_2$.
\end{corollary}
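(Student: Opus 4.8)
The plan is to reduce the statement to Theorem~\ref{tree} together with the characterization of spanning trees as maximal acyclic (equivalently, minimal spanning connected) subgraphs. First I would establish the forward direction: assume $G(S)$ is a spanning tree of $G$. Since $G(S)$ has the same vertex set $V=\set{1,10,\ldots,10^{n-1}}$ as $G$ and its edge set is precisely the set of edges represented by the vectors in $S$, we have $S=\beta(G(S),n)$ with the vertex labeling inherited from $G$. As $G(S)$ is a tree on $n$ vertices, Theorem~\ref{tree} applies directly and gives that $S=\beta(G(S),n)$ is a basis of $\Int_2^{n-1}$ over $\Int_2$.

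For the converse, suppose $S\subseteq\beta(G,n)$ is a basis of $\Int_2^{n-1}$. Again $S=\beta(G(S),n)$, so Theorem~\ref{tree} forces $G(S)$ to be a tree on $n$ vertices; in particular $G(S)$ is connected and spans all of $V$. Since every edge of $G(S)$ is an edge of $G$ (because $S\subseteq\beta(G,n)$ and the correspondence between vectors in $C(n-1)$ and edges of the complete graph on $V$ is the one fixed in Definition~\ref{seqtogrph}), $G(S)$ is a spanning connected subgraph of $G$ that is a tree, i.e., a spanning tree of $G$.

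The only subtlety worth spelling out is the bookkeeping identity $S=\beta(G(S),n)$: one must check that passing from a subset $S\subseteq\beta(G,n)$ to the graph $G(S)$ and back recovers $S$ exactly, and that the vertex labeling used to compute $\beta(G(S),n)$ is consistent with the one used for $\beta(G,n)$. This is immediate from Definition~\ref{seqtogrph}, where $G(S)$ is defined on the fixed labeled vertex set $\set{1,10,\ldots,10^{n-1}}$ with an edge for each vector of $S$ placed between the prescribed endpoints; so the map $S\mapsto G(S)\mapsto\beta(G(S),n)$ is the identity on subsets of $C(n-1)$. I expect this to be the main (and only) obstacle, and it is a routine unwinding of definitions rather than a real difficulty; the substantive content is entirely carried by Theorem~\ref{tree}. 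A one- or two-line proof citing Theorem~\ref{tree} in both directions should suffice, exactly parallel to the proof of Corollary~\ref{connected}.
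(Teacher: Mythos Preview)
Your proposal is correct and matches the paper's approach: the paper states Corollary~\ref{spantree} without proof, leaving it as an immediate consequence of Theorem~\ref{tree} via the identification $S=\beta(G(S),n)$, which is exactly the reduction you carry out. Your explicit verification of the bookkeeping identity $S=\beta(G(S),n)$ from Definition~\ref{seqtogrph} is a welcome clarification but not something the paper spells out.
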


\section{Matroid representation}

Whitney introduced the concept of a matroid in \cite{HW}. There are several ways of defining matroids. We take the one that will serve our purpose. A {\em matroid} $M$ is an ordered pair $(E,\mathscr{B})$ consisting of a finite set $E$ of {\em elements} and a nonempty collection $\mathscr{B}$ of subsets of $E$, called {\em bases} which satisfies the properties: (i) no proper subset of a base is a base and (ii) if $B_1,B_2\in \mathscr{B}$ and $e\in B_1\smallsetminus B_2$, then there exists $f\in B_2\smallsetminus B_1$ such that $(B_1\smallsetminus \set{e})\cup \set{f}\in\mathscr{B}$. {\em Independent sets} of $M$ are subsets of bases and minimal dependent sets are {\em circuits}. The {\em cycle matroid} $M[G]$ of graph $G$ is the matroid whose elements are edges of $G$ and circuits are cycles of $G$. Independent sets and bases of $M[G]$ are forests and maximal forests of $G$ respectively. A matroid is {\em graphic} ({\em simple graphic}) if it is a cycle matroid of a graph (respectively, simple graph).

\vspace{1em}\noindent
Let $E$ be the set of column labels of an $n\times m$ matrix $A$ over a field $F$, and $\mathscr{B}$ be the set of maximal subsets $X$ of $E$ for which the multiset of columns labeled by $X$ is linearly independent in the vector space $F^m$ over $F$. Then the pair $(E,\mathscr{B})$ is the {\em column} ({\em vector}) {\em matroid} of $A$ and is denoted by $M[A]$. In particular, if $F=\Int_2$, then $M[A]$ is a {\em binary matroid}. A binary matroid $M[A]$ is {\em simple} if $A$ does not contain zero columns and no two columns of $A$ are identical (i.e., columns of $A$ are non-zero and distinct). 

\begin{definition}
A binary matroid $M[A]$ is called a {\em \textbf{segment binary matroid}} if $A$ satisfies the consecutive $1$'s property for columns. Moreover, if it is simple, then we call it a {\em \textbf{simple segment binary matroid}}. For any $\emptyset\neq S\subseteq \Int_2^{n-1}$, $M[S]$ denotes the column (vector) matroid of the matrix whose columns are precisely the elements of $S$. Clearly, $M[S]$ is a binary matroid.
\end{definition}

\begin{remark}\label{ssbm cn1}
In particular, when $\emptyset\neq S \subseteq C(n-1)$, $M[S]$ becomes a simple segment binary matroid. So for any simple graph $G$ with $n$ vertices, $M[\beta (G, n)]$ is a simple segment binary matroid. Conversely, every simple segment binary matroid $M[A]$ with $n-1$ rows is same as $M[S]$, where $S$ is the set of column vectors of $A$ over $\Int_2$. Also in this case $S\subseteq C(n-1)$.
\end{remark}

\noindent
Two matroids $M_1=(E_1,\mathscr{B}_1)$ and $M_2=(E_2,\mathscr{B}_2)$ are {\em isomorphic} if there is a bijection $\psi$ from $E_1$ onto $E_2$ such that for all $X\subseteq E_1$, $X$ is independent in $M_1$ if and only if $\psi(X)$ is independent in $M_2$ (or, equivalently, $X$ is a circuit in $M_1$ if and only if $\psi(X)$ is a circuit in $M_2$). In this case, we denote by $M_1\cong M_2$. Also abusing notations we sometimes identify elements of $M[A]$ with its corresponding column vector representation. Thus a simple binary matroid $M[A]$ may be considered as the set of column vectors of $A$. The following theorem characterizes isomorphisms of simple binary matroids in terms of linear transformations. 

\begin{theorem}\label{matiso}
Let $M[A]$ and $M[A_1]$ be two simple binary matroids such that both $A$ and $A_1$ are of same order $n\times m$, $(m,n\in\Nat)$. Then $M[A]\cong M[A_1]$ if and only if there exists a bijective linear operator $T$ on $\Int_2^n$ such that $T$ restricted on $M[A]$ is a bijective map from $M[A]$ onto $M[A_1]$.
\end{theorem}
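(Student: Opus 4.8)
The plan is to prove both directions by translating the combinatorial condition "$X$ is independent in $M[A]$" into the linear-algebraic condition "the columns indexed by $X$ are linearly independent in $\Int_2^n$", which is legitimate since these are column (vector) matroids and independence in a vector matroid is exactly linear independence of the corresponding columns. For the easy direction, suppose such a bijective linear operator $T$ on $\Int_2^n$ exists that carries $M[A]$ (viewed as its set of column vectors) bijectively onto $M[A_1]$. Then for any subset $X$ of columns of $A$, the set $\{T(v) : v \in X\}$ is the corresponding subset of columns of $A_1$; since $T$ is a bijective linear operator it preserves and reflects linear independence, so $X$ is independent in $M[A]$ if and only if its image is independent in $M[A_1]$. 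Thus the restriction of $T$ to $M[A]$ is a matroid isomorphism onto $M[A_1]$, giving $M[A] \cong M[A_1]$.

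For the converse, assume $M[A] \cong M[A_1]$ via a bijection $\psi$ between the column sets. First I would invoke simplicity: since $A$ has no zero columns and no repeated columns, its $m$ columns are distinct nonzero vectors of $\Int_2^n$; let $r$ be the rank of $M[A]$, i.e., the dimension of $\mathrm{Sp}$ of the columns of $A$. Pick a basis $B = \{v_1,\dots,v_r\}$ of $M[A]$ (a maximal independent set of columns). Because $\psi$ is a matroid isomorphism, $\psi(B) = \{w_1,\dots,w_r\}$ with $w_i = \psi(v_i)$ is a basis of $M[A_1]$, hence a linearly independent set spanning the column space of $A_1$; in particular $M[A_1]$ also has rank $r$. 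Extend $\{v_1,\dots,v_r\}$ to a basis $\{v_1,\dots,v_r,u_{r+1},\dots,u_n\}$ of the whole space $\Int_2^n$, and likewise extend $\{w_1,\dots,w_r\}$ to a basis $\{w_1,\dots,w_r,u'_{r+1},\dots,u'_n\}$ of $\Int_2^n$. Define $T$ to be the unique linear operator with $T(v_i) = w_i$ for $i \le r$ and $T(u_j) = u'_j$ for $j > r$; then $T$ is a bijective linear operator on $\Int_2^n$ because it sends a basis to a basis.

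It remains to check that $T$ restricted to $M[A]$ equals $\psi$, i.e., that $T(v) = \psi(v)$ for every column $v$ of $A$, not merely for the chosen basis $B$. This is the step I expect to be the main obstacle, and it is where simplicity and the matroid structure do the real work. Given any column $v$ of $A$, write $v$ uniquely as a sum $v = \sum_{i \in I} v_i$ over some subset $I \subseteq \{1,\dots,r\}$ of the basis $B$; such $I$ exists and is unique since $B$ spans the column space of $A$ and we are over $\Int_2$. The set $\{v\} \cup \{v_i : i \in I\}$ is then a minimal linearly dependent set of columns of $A$, i.e., a circuit of $M[A]$. Applying $\psi$, the set $\{\psi(v)\} \cup \{w_i : i \in I\}$ is a circuit of $M[A_1]$, which over $\Int_2$ forces $\psi(v) + \sum_{i \in I} w_i = \mathbf{0}$, that is $\psi(v) = \sum_{i \in I} w_i$. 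On the other hand, by linearity $T(v) = \sum_{i \in I} T(v_i) = \sum_{i \in I} w_i$. Hence $T(v) = \psi(v)$ for every column $v$, so $T|_{M[A]} = \psi$ is a bijection of $M[A]$ onto $M[A_1]$, completing the proof. One should double-check the degenerate cases (the zero matroid, or when a column itself lies in $B$, where $I$ is a singleton and the "circuit" is trivial), but these are handled uniformly by the same unique-representation argument.
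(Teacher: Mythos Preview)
Your proof is correct and follows the same architecture as the paper's: for the harder direction you both choose a matroid base $B$, extend $B$ and $\psi(B)$ to bases of $\Int_2^n$, and define $T$ linearly on those extended bases. The only difference is in the verification that $T|_{M[A]}=\psi$: the paper checks pairwise additivity (that $\psi(e_1+e_2)=\psi(e_1)+\psi(e_2)$ whenever $e_1,e_2,e_1+e_2$ are all columns of $A$, via the $3$-element circuit $\{e_1,e_2,e_1+e_2\}$) and then asserts the conclusion, whereas you argue directly via the fundamental circuit of each column $v$ with respect to $B$; your route is a bit cleaner because it does not require the intermediate partial sums $\sum_{i\in J}v_i$ to themselves be columns of $A$.
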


\begin{proof}
Let $\psi$ be an isomorphism from $M[A]$ onto $M[A_1]$. Let $B$ be a base in $M[A]$. Then $B$ is linearly independent over $\Int_2$. We extend $B$ to a basis $B_1$ (say) of $\Int_2^n$ over $\Int_2$. Now since $\psi$ is an isomorphism, $\psi(B)$ is also a base in $M[A_1]$ and $|\psi(B)|=|B|$. We also extend $\psi(B)$ to $B_2$, a basis of $\Int_2^n$ over $\Int_2$. Then $|B_1\smallsetminus B|=|B_2\smallsetminus \psi(B)|=n-|B|$. Let $f$ be a bijection from $B_1\smallsetminus B$ onto $B_2\smallsetminus \psi(B)$. Now define a map $\Map{T_1}{B_1}{B_2}$ by 
$$T_1(e)=\left\{%
\begin{array}{ll}
\psi(e), & e\in B\\
f(e), & e\in B_1\smallsetminus B
\end{array}\right.$$

\noindent
We next verify that $\psi$ is `linear' on $M[A]$, i.e., if $e_1,e_2$ are columns of $A$ such that $e_1+e_2$ is also a column of $A$, then $\psi(e_1+e_2)=\psi(e_1)+\psi(e_2)$. Let $e=e_1+e_2$. Then $e+e_1+e_2=\mathbf{0}$ which implies that $\set{e,e_1,e_2}$ is a circuit of $M[A]$. Again since $\psi$ is an isomorphism, $\set{\psi(e),\psi(e_1),\psi(e_2)}$ is also a circuit in $M[A_1]$. Hence $\psi(e)+\psi(e_1)+\psi(e_2)=\mathbf{0}$, i.e., $\psi(e_1+e_2)=\psi(e)=\psi(e_1)+\psi(e_2)$. This completes the verification. We extend $T_1$ linearly to obtain a linear operator $T$ on $\Int_2^n$ over $\Int_2$. Then $T$ is bijective as $T_1$ maps a basis bijectively to another basis of $\Int_2^n$ over $\Int_2$ and the restriction of $T$ on $M[A]$ is $\psi$ due to the above verification. 

\vspace{0.5em}\noindent
Conversely, let $T$ be a bijective linear operator on $\Int_2^n$ such that the map $\psi$, the restriction of $T$ on $M[A]$ is a bijective map from $M[A]$ onto $M[A_1]$. Let $X$ be a circuit in $M[A]$. Then $\displaystyle{\sum\limits_{e\in X} e=\mathbf{0}}$ and $\displaystyle{\sum\limits_{e\in A} e\neq \mathbf{0}}$ for all $\emptyset\neq A\subsetneqq X$. Now since $T$ is bijective and linear, we have $\displaystyle{\sum\limits_{e\in A} e = \mathbf{0}}$ if and only if $\displaystyle{\sum\limits_{e\in A} T(e) = \mathbf{0}}$ for all $\emptyset\neq A\subseteq X$. Thus $X$ is a circuit in $M[A]$ if and only if $\psi(X)$ is a circuit in $M[A_1]$. Hence $\psi$ is an isomorphism from $M[A]$ onto $M[A_1]$.
\end{proof}

\begin{corollary}\label{matcong}
Let $M[A]$ and $M[A_1]$ be two simple binary matroids such that both $A$ and $A_1$ are of same order $n\times m$, $(m,n\in\Nat)$. Then $M[A]\cong M[A_1]$ if and only if there exist a non-singular matrix $P$ of order $n\times n$ and a permutation matrix $Q$ of order $m\times m$ such that $PAQ=A_1$.
\end{corollary}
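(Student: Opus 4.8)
The plan is to derive Corollary~\ref{matcong} directly from Theorem~\ref{matiso} by translating the statement about the bijective linear operator $T$ and its restriction into the language of matrices. Recall that $M[A]\cong M[A_1]$ holds, by Theorem~\ref{matiso}, exactly when there is a bijective linear operator $T$ on $\Int_2^n$ whose restriction to the column set of $A$ is a bijection onto the column set of $A_1$. Fixing the standard basis of $\Int_2^n$, the operator $T$ is represented by a non-singular $n\times n$ matrix $P$ over $\Int_2$, and $T$ acting on a column vector $v$ is just $Pv$. So the condition ``$T$ maps the columns of $A$ bijectively onto the columns of $A_1$'' says precisely that the multiset of columns of $PA$ equals the multiset of columns of $A_1$; since both matroids are simple, these are genuine sets of $m$ distinct nonzero vectors, so $PA$ and $A_1$ have the same columns up to a reordering. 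A reordering of columns is exactly post-multiplication by an $m\times m$ permutation matrix $Q$, giving $PAQ = A_1$ (choosing $Q$ to undo the permutation so that column $j$ of $PAQ$ matches column $j$ of $A_1$).

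First I would spell out the ``only if'' direction along these lines: start from the $T$ supplied by Theorem~\ref{matiso}, let $P$ be its matrix with respect to the standard basis, observe $P$ is non-singular since $T$ is bijective, note that the $j$-th column of $PA$ is $T$ applied to the $j$-th column of $A$, hence lies in the column set of $A_1$, and that distinctness of columns of $A$ (simplicity) together with injectivity of $T$ forces the columns of $PA$ to be distinct; therefore the map sending the $j$-th column of $PA$ to its position in $A_1$ is a well-defined permutation $\pi$ of $\set{1,\ldots,m}$, and with $Q$ the permutation matrix of $\pi$ we get $PAQ=A_1$.

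For the ``if'' direction I would run the argument backwards: given non-singular $P$ and a permutation matrix $Q$ with $PAQ=A_1$, define $T$ to be the linear operator on $\Int_2^n$ with matrix $P$. Then $T$ is a bijective linear operator, and since $Q$ merely permutes columns, the column set of $A_1=PAQ$ is $\Set{T(v)}{v \text{ a column of } A}=T(\text{column set of }A)$; because $T$ is injective this restriction is a bijection from $M[A]$ onto $M[A_1]$, so Theorem~\ref{matiso} yields $M[A]\cong M[A_1]$.

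I do not expect any serious obstacle here; the corollary is essentially a matrix reformulation of the theorem, and the only points needing a little care are the bookkeeping that simplicity guarantees the columns stay distinct (so that the correspondence between columns really is a permutation rather than merely a surjection of multisets) and the routine identification of ``permuting the columns of a matrix'' with ``right multiplication by a permutation matrix.'' Both are standard, so the write-up should be short.
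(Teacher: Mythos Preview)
Your proposal is correct and follows essentially the same route as the paper: both directions are obtained from Theorem~\ref{matiso} by representing the bijective linear operator $T$ as a non-singular matrix $P$ and recognising that the bijection on column sets amounts to a column permutation, i.e., right multiplication by a permutation matrix $Q$. Your write-up is slightly more explicit (fixing the standard basis and invoking simplicity to guarantee the columns of $PA$ are distinct), but the underlying argument is the same as the paper's.
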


\begin{proof}
If $M[A]\cong M[A_1]$, then following the proof of the direct part of the above theorem, consider two bases $B_1$ and $B_2$ of $\Int_2^n$ over $\Int_2$ and the bijective linear operator $T$ that maps $B_1$ onto $B_2$. Let $P$ be the matrix representation of $T$ with respect to these bases. Then $P$ is a non-singular matrix and $PA=A_2$ where $A_2$ is obtained from $A_1$ by rearranging columns such that $i^{\text{th}}$ column of $A_2$ is the image of the $i^{\text{th}}$ column of $A$ under $T$. Thus $PAQ=A_1$ for some permutation matrix $Q$.

\vspace{0.5em}\noindent
Conversely, let $A_1=PAQ$ for some non-singular matrix $P$ and some permutation matrix $Q$. Let $A_2=A_1Q^{-1}$. Then $PA=A_2$. Since $P$ is non-singular, it corresponds to a bijective linear operator $T$ on $\Int_2^n$ (over $\Int_2$) defined by $T(e)=Pe$ (considering elements of $\Int_2^n$ as column matrices) such that the restriction of $T$ on $M[A]$ is a bijective map from $M[A]$ onto $M[A_2]$. Then $M[A]\cong M[A_2]$ by the above theorem. Since $M[A_1]=M[A_2]$, we have $M[A]\cong M[A_1]$.  
\end{proof}

\noindent
Now we proceed to characterize simple graphic matroids. 

\begin{lemma}\label{gbeta}
Let $G$ be a simple graph with $n$ vertices. Then $M[G] \cong M[\beta (G, n)]$ for any coding sequence $\beta (G, n)$ of $G$.
\end{lemma}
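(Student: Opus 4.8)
The plan is to exhibit an explicit matroid isomorphism, namely the natural bijection $\psi \colon E(G) \to \beta(G,n)$ sending an edge $e$ to the vector $f^\#(e)$. That $\psi$ is a bijection I would justify by observing that the location of the consecutive block of $1$'s in $f^\#(v_av_b)$ determines the unordered pair $\{v_a,v_b\}$, so $f^\#$ is injective on $E$, and it is onto $\beta(G,n)$ by definition; this is really just the ``vice-versa'' passage between a coding sequence and the edge set noted in and after Definition \ref{seqtogrph}. The same passage shows that, for any $X\subseteq E(G)$, the graph $\tilde G(\psi(X))$ of Definition \ref{seqtogrph} is exactly the subgraph of $G$ on edge set $X$ (after deleting isolated vertices), under the labelling $f$ of the vertices of $G$. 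In particular, $X$ is a cycle of $G$ if and only if $\tilde G(\psi(X))$ is a $|X|$-cycle.

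Next I would pin down the circuits of the two matroids. In $M[G]$ the circuits are precisely the cycles of $G$, and since $G$ is simple each has size in the range $3\le |X|\le n$. For $M[\beta(G,n)]$: because $\beta(G,n)\subseteq C(n-1)$ consists of distinct nonzero vectors, it follows from the description of linear dependence given just before Proposition \ref{3cycle} together with the definition of a reduced set that a nonempty $C\subseteq\beta(G,n)$ is a circuit of $M[\beta(G,n)]$ if and only if $C$ is reduced and $\sum_{e\in C} e=\mathbf{0}$; moreover any such $C$ has $|C|\ge 3$ (two distinct vectors cannot sum to $\mathbf{0}$) and $|C|\le n$ (the ambient space $\Int_2^{n-1}$ has rank $n-1$, so any circuit has at most $n$ elements). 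Hence both matroids have all of their circuits of size between $3$ and $n$, which is exactly the range handled by Lemma \ref{kcycle}.

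The core step is then immediate: for $X\subseteq E(G)$ with $3\le |X|\le n$, Lemma \ref{kcycle} applied to $S=\psi(X)\subseteq C(n-1)$ says that $\tilde G(\psi(X))$ is a $|X|$-cycle if and only if $\psi(X)$ is reduced with $\sum_{e\in\psi(X)} e=\mathbf{0}$, i.e.\ if and only if $\psi(X)$ is a circuit of $M[\beta(G,n)]$; combined with the first paragraph this gives that $X$ is a circuit of $M[G]$ if and only if $\psi(X)$ is a circuit of $M[\beta(G,n)]$. The remaining sizes ($|X|\le 2$ or $|X|>n$) contribute no circuits on either side, so $\psi$ is a matroid isomorphism and $M[G]\cong M[\beta(G,n)]$. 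I do not expect a real obstacle here; the only thing demanding care is the bookkeeping that connects the purely combinatorial notion of a graph cycle with the linear-algebraic description of a circuit, and checking that the edge-to-vector correspondence $e\mapsto f^\#(e)$ is compatible with the vertex labelling used to build $\tilde G(S)$ — once that is set up, Lemma \ref{kcycle} does all the work. (Alternatively one could argue via the incidence matrix $N_G$, using that $M[G]=M[N_G]$ and that the coding matrix is obtained from $N_G$ by an invertible linear recoding followed by deleting a redundant row, but invoking Lemma \ref{kcycle} directly is cleaner and stays within the framework already developed.)
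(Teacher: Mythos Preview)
Your proposal is correct and follows essentially the same route as the paper: both arguments invoke Lemma \ref{kcycle} to identify the cycles of $G$ with the circuits of $M[\beta(G,n)]$ under the natural bijection $e\mapsto f^\#(e)$, whence the matroid isomorphism. The paper compresses this into a single sentence, while you have (correctly and carefully) spelled out the bijection, the circuit descriptions on each side, and the size bookkeeping.
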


\begin{proof}
It follows from Lemma \ref{kcycle} that cycles of $G$ are precisely the circuits of the matroid $M[\beta (G, n)]$. So $M[G] \cong M[\beta (G, n)]$ as matroids.
\end{proof}

\begin{theorem}\label{grchar}
A matroid is simple graphic if and only if it is isomorphic to a simple segment binary matroid.
\end{theorem}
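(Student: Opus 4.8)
\medskip

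The plan is to prove both implications by chaining together the machinery already assembled. For the forward direction, suppose $M$ is simple graphic, so $M \cong M[G]$ for some simple graph $G$; say $G$ has $n$ vertices. Then by Lemma~\ref{gbeta} we have $M[G] \cong M[\beta(G,n)]$ for any coding sequence, and by Remark~\ref{ssbm cn1} the matroid $M[\beta(G,n)]$ is a simple segment binary matroid (its defining matrix has columns in $C(n-1)$, hence nonzero, distinct, and with the consecutive $1$'s property). Composing the isomorphisms gives $M \cong M[\beta(G,n)]$, a simple segment binary matroid, as required. This direction is essentially immediate.

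\medskip

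For the converse, suppose $M \cong M[A]$ where $M[A]$ is a simple segment binary matroid. Let $A$ have $k$ rows. The first step is to arrange, without loss of generality, that $A$ has exactly $n-1$ rows for a suitable $n$: by Remark~\ref{ssbm cn1}, writing $S$ for the set of column vectors of $A$, we have $M[A] = M[S]$ and $S \subseteq C(k+1)$ when we take $n = k+1$; more precisely we should note that padding $A$ with zero rows at the top (equivalently, viewing its columns as shorter vectors in $C(n-1)$ for the appropriate $n$) does not change the column matroid, so we may take $n$ minimal such that $S \subseteq C(n-1)$. Now invoke Definition~\ref{seqtogrph}: for this $S \subseteq C(n-1)$ there is a simple graph $G(S)$ on $n$ vertices with $\beta(G(S), n) = S$, and hence $M[S] = M[\beta(G(S), n)]$. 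By Lemma~\ref{gbeta}, $M[\beta(G(S),n)] \cong M[G(S)]$, which is a graphic matroid arising from a simple graph, i.e.\ simple graphic. Chaining, $M \cong M[A] = M[S] = M[\beta(G(S),n)] \cong M[G(S)]$, so $M$ is simple graphic.

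\medskip

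The one point that needs genuine care is the ``padding with zero rows'' step in the converse: one must check that prepending zero rows to $A$ leaves the column matroid unchanged (it does, since linear (in)dependence of a set of column vectors over $\Int_2$ is unaffected by adjoining coordinates that are identically zero), and that after this adjustment the columns still have the consecutive $1$'s property and still lie in $C(n-1)$ for the chosen $n$ — both clear since a run of $1$'s stays a run of $1$'s when zeros are appended at the ends. Apart from this bookkeeping, the theorem is a direct corollary of Lemma~\ref{gbeta}, Remark~\ref{ssbm cn1}, and the $S \mapsto G(S)$ correspondence of Definition~\ref{seqtogrph}, together with transitivity of matroid isomorphism. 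I would therefore expect the proof to be short, with the only subtlety being to state the row-count normalization cleanly so that Definition~\ref{seqtogrph} applies verbatim.
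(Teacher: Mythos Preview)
Your proof is correct and follows essentially the same route as the paper: both directions chain Lemma~\ref{gbeta}, Remark~\ref{ssbm cn1}, and the $S\mapsto G(S)$ correspondence of Definition~\ref{seqtogrph}. The only difference is that your ``padding with zero rows'' and ``take $n$ minimal'' discussion is unnecessary (and contains a slip: with $k$ rows you want $S\subseteq C(k)$, not $C(k+1)$)---the paper simply declares the row count of $A$ to be $n-1$ and proceeds, which is all that is needed.
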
 

\begin{proof}
Let $M$ be a simple graphic matroid. Then $M \cong M[G]$ for a simple graph $G$. By Lemma \ref{gbeta}, we have $M[G] \cong M[\beta (G, n)]$ where $n$ is the number of vertices of $G$. Thus $M$ is isomorphic to a simple segment binary matroid by Remark \ref{ssbm cn1}.

\vspace{0.5em}\noindent
Conversely, let $M[A]$ be a simple segment binary matroid. Then we may consider $M[A]$ as $M[S]$ where $S$ is the set of columns of $A$. By Remark \ref{ssbm cn1}, we have $S \subseteq C(n-1)$, where $A \in M_{n-1, m}(\mathbb Z_2)$. Then by Definition \ref{seqtogrph}, there is a unique simple graph $G$ such that $S=\beta (G, n)$. Therefore, by Lemma \ref{gbeta}, $M[S]=M[\beta (G, n)] \cong M[G]$. Thus, $M[A]$ is a simple graphic matroid.
\end{proof}

\begin{corollary}
A simple binary matroid $M[A]$ $($where $A$ is of order $(n-1)\times m)$ is simple graphic if and only if $m\leqslant \binom{n}{2}$ and there exists a non-singular matrix $P$ such that $PA$ satisfies the consecutive $1$'s property for columns.
\end{corollary}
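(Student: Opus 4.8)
The plan is to obtain both directions from Corollary~\ref{matcong} together with Theorem~\ref{grchar}, the only genuine work being to produce, in the forward direction, a consecutive-$1$'s representing matrix of \emph{exactly} the same order $(n-1)\times m$ as $A$, so that Corollary~\ref{matcong} applies.

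For the easy direction, suppose $m\le\binom{n}{2}$ and $P$ is non-singular with $B:=PA$ satisfying the consecutive $1$'s property for columns. Since $M[A]$ is simple, the columns of $A$ are non-zero and pairwise distinct; as $P$ is non-singular, it is injective and sends non-zero vectors to non-zero vectors, so the columns of $B$ are again non-zero and pairwise distinct. Hence $M[B]$ is a simple segment binary matroid. Taking $Q$ to be the identity in Corollary~\ref{matcong} (so $PAQ=B$) gives $M[A]\cong M[B]$, and therefore $M[A]$ is simple graphic by Theorem~\ref{grchar}. (The hypothesis $m\le\binom{n}{2}$ is in fact redundant here, since the columns of $B$ all lie in $C(n-1)$ and hence $m\le|C(n-1)|=\binom{n}{2}$; it is listed only because it is an easily checked necessary condition.)

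For the forward direction, suppose $M[A]$ is simple graphic, so $M[A]\cong M[G]$ for some simple graph $G$. First I would replace $G$ by a simple graph on at most $n$ vertices with the same cycle matroid. Delete the isolated vertices of $G$, and then repeatedly identify a vertex of one component with a vertex of another: this creates no loops or parallel edges, and since every cycle of the resulting graph lies entirely inside a single one of the original components, the cycle matroid is unchanged (this is the standard correspondence between $1$-sums of graphs and direct sums of their cycle matroids; see \cite{Ox}). In this way I get a connected simple graph $G_0$ with $M[G_0]\cong M[A]$ and $|V(G_0)|=\operatorname{rank}(M[A])+1=\operatorname{rank}(A)+1\le(n-1)+1=n$. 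Adjoining $n-|V(G_0)|$ isolated vertices produces a simple graph $G'$ on $n$ vertices with $M[G']=M[G_0]\cong M[A]$. Let $A_1$ be the matrix whose columns are the members of $\beta(G',n)\subseteq C(n-1)$; then $A_1$ is an $(n-1)\times m$ matrix with the consecutive $1$'s property for columns whose columns are non-zero and distinct, so $M[A_1]$ is a simple binary matroid, and by Lemma~\ref{gbeta}, $M[A_1]=M[\beta(G',n)]\cong M[G']\cong M[A]$. In particular $m=|\beta(G',n)|\le|C(n-1)|=\binom{n}{2}$. Now $M[A]$ and $M[A_1]$ are simple binary matroids of the same order $(n-1)\times m$, so Corollary~\ref{matcong} yields a non-singular $P$ and a permutation matrix $Q$ with $PAQ=A_1$; hence $PA=A_1Q^{-1}$, which is obtained from $A_1$ by permuting columns and therefore still satisfies the consecutive $1$'s property for columns. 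This proves the claim.

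The one delicate point is the reduction of $G$ to a representing simple graph on at most $n$ vertices — without it one cannot guarantee that the consecutive-$1$'s representation has the required number $n-1$ of rows. Everything else is a routine application of Corollary~\ref{matcong}, Theorem~\ref{grchar} and Lemma~\ref{gbeta}. I would therefore either spell out the vertex-identification argument carefully (noting that identifying vertices from distinct components of a simple graph keeps it simple, and that cycles cannot cross the identified cut vertex) or simply cite it as the well-known fact that $1$-sums of graphs correspond to direct sums of cycle matroids.
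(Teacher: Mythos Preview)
Your proof is correct and follows the same route as the paper, which simply cites Theorem~\ref{grchar} and Corollary~\ref{matcong}. In fact you are more careful than the paper: the reduction of $G$ to a simple graph on exactly $n$ vertices (via deleting isolated vertices, taking $1$-sums to make the graph connected, and then padding with isolated vertices) is genuinely needed so that $\beta(G',n)$ produces an $(n-1)\times m$ matrix and Corollary~\ref{matcong} applies, whereas the paper leaves this step implicit.
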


\begin{proof}
Follows from Theorem \ref{grchar} and Corollary \ref{matcong}.
\end{proof}

\begin{remark}
Since any non-singular matrix is obtained from identity matrix by finite number of elementary row operations, a simple binary matroid $M[A]$ is simple graphic if and only if the consecutive $1$'s property for columns can be obtained from $A$ by finite number of elementary row operations.
\end{remark}

\noindent
It is well known \cite{Ox} that an ordinary matroid isomorphism does not guarantee the corresponding graph isomorphoism for graphic matroids.  We now introduce the concept of a strong isomorphism of simple segment binary matroids.

\begin{definition}\label{strongiso}
Two simple segment binary matroids $M[A_1]$ and $M[A_2]$ are called {\em \textbf{strongly isomorphic}} if
\begin{enumerate}
\item[(1)] $A_1, A_2 \in M_{n-1, m}(\mathbb Z_2)$ for some $m,n \in \mathbb N$, $n\geqslant 2$.
\item[(2)] There exists a bijective linear operator $T$ on $\mathbb Z_2^{n-1}$ such that:
\begin{enumerate}
\item[(i)] $T$ restricted on $C(n-1)$ is a bijection onto itself.
\item[(ii)] $T$ restricted on $M[A_1]$ is a bijective map from $M[A_1]$ onto $M[A_2]$.
\end{enumerate}
\end{enumerate}

\noindent 
We write $M[A_1] \cong _s M[A_2]$ to denote that $M[A_1]$ is strongly isomorphic to $M[A_2]$.
\end{definition}

\noindent
Note that, if $M[A_1] \cong _s M[A_2]$, then the restriction of $T$ on $M[A_1]$ is a matroid isomorphism onto $M[A_2]$ and the restriction of $T$ on $C(n-1)$ is a matroid automorphism. These follow from the fact that $T$ is linear and injective, as then for any subset $X$ of the set of columns of $A_1$, $\sum\limits_{e\in X} e =\mathbf{0}$ if and only if $\sum\limits_{e\in X} T(e) = \mathbf{0}$. In the sequel, we show that strong isomorphism of simple segment binary matroids would guarantee the corresponding graph isomorphism.

\vspace{1em}\noindent
Let $G=(V,E)$ be a (simple undirected) graph with $|V|=n$. Then for any $e\in E$, we use the symbol $p\sim _n |10^x-10^y|$ if $f^*(e)=|10^x-10^y|$ and $p=f^\#(e)\in\beta (G,n)$.

\begin{lemma}\label{p1p2 end}
Let $p_1, p_2$ be distinct elements in $\beta (G, n)$ for any coding sequence $\beta (G,n)$ of a graph $G$ with $n$ vertices. If $p_1 \sim _n |10^x-10^y|$ and $p_2 \sim _n |10^x-10^z|$, then $p_1+p_2 \sim _n |10^y-10^z|$. 
\end{lemma}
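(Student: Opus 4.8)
The plan is to route through Proposition \ref{3cycle}. The point is that $p_1$ and $p_2$ represent two edges sharing the common endpoint $10^x$, so together with the ``virtual'' edge joining the two remaining endpoints $10^y$ and $10^z$ they should form a triangle, and then the relation $e_1+e_2+e_3=\mathbf 0$ of Proposition \ref{3cycle} gives exactly $p_1+p_2=$ (the code vector of $\{10^y,10^z\}$).

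First I would check that the hypotheses force $x,y,z$ to be pairwise distinct. Since $p_1,p_2\in\beta(G,n)$ are code vectors of genuine edges, the associated numbers $f^*$ are nonzero, so $x\neq y$ and $x\neq z$; and if $y=z$ then $p_1$ and $p_2$ would both be $f^\#$ of the number $|10^x-10^y|$, hence equal, contradicting $p_1\neq p_2$. Next, let $q\in C(n-1)$ be the vector whose consecutive block of $1$'s encodes the pair $\{10^y,10^z\}$ as in Definition \ref{seqtogrph} (well defined since $y\neq z$; by construction $q\in C(n-1)$ and $q\sim_n|10^y-10^z|$). Set $S=\{p_1,p_2,q\}$. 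Because $x,y,z$ are distinct, $S$ consists of three distinct vectors of $C(n-1)$ (so in particular $\binom n2\geq 3$, i.e.\ $n\geq 3$, as Proposition \ref{3cycle} requires), and $\tilde G(S)$ is the subgraph of $K_n$ on the vertices $10^x,10^y,10^z$ with those three edges, i.e.\ a $3$-cycle. The forward direction of Proposition \ref{3cycle} then yields $p_1+p_2+q=\mathbf 0$, so $q=p_1+p_2$, which is precisely $p_1+p_2\sim_n|10^y-10^z|$.

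I expect no real obstacle here: the only thing to be careful about is the bookkeeping that $\tilde G(\{p_1,p_2,q\})$ really is a $3$-cycle (three distinct edges on three distinct vertices), which is immediate once the pairwise distinctness of $x,y,z$ is established. If one wishes to avoid invoking Proposition \ref{3cycle}, the identity can instead be verified by direct computation: by the symmetry of the statement under swapping $(y,p_1)\leftrightarrow(z,p_2)$ one may assume $y>z$, and then splitting into the three cases $x>y>z$, $y>x>z$, and $y>z>x$, one reads off from Definition \ref{seqtogrph} the positions of the consecutive $1$-blocks of $p_1$ and $p_2$ and checks that their symmetric difference is again a single consecutive block occupying positions $n-y$ through $n-z-1$ — which is exactly the code vector of $\{10^y,10^z\}$. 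This is routine but a little tedious, so I would prefer the argument via Proposition \ref{3cycle}.
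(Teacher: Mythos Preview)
Your argument is correct and, like the paper's, routes through Proposition~\ref{3cycle}. The minor difference is in which direction of that proposition is used: you introduce the vector $q\in C(n-1)$ encoding the edge $\{10^y,10^z\}$, observe that $\tilde G(\{p_1,p_2,q\})$ is a $3$-cycle, and apply the \emph{forward} implication to obtain $p_1+p_2+q=\mathbf 0$; the paper instead sets $p_3=p_1+p_2$ and appeals to the computation in the \emph{converse} part of the proof of Proposition~\ref{3cycle} to read off the endpoints of $p_3$. Your version has the slight advantage of invoking only the statement of the proposition rather than reaching into its proof, and of making the pairwise distinctness of $x,y,z$ (and hence $n\geq 3$) explicit before applying it.
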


\begin{proof}
Let $p_3=p_1+p_2$. So $p_1+p_2+p_3=\mathbf{0}$. From the converse part of the proof of Proposition \ref{3cycle}, we have that $p_3$ corresponds to the end points $10^y$ and $10^z$. Thus, $p_1+p_2 \sim _n |10^y-10^z|$.
\end{proof}

\begin{lemma}\label{path}
Let $S=\{e_1, e_2, \ldots, e_k\} \subseteq \beta (G, n)$ for any coding sequence $\beta (G, n)$ of a graph $G$ with $n$ vertices. Then $\tilde G(S)$ induces a path in $G$ if and only if $\sum \limits_{j=1}^k{e_j} \in C(n-1)$ and $S$ is reduced.
\end{lemma}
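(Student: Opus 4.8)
The plan is to pivot both implications on the single vector $w:=e_1+e_2+\cdots+e_k$ and on the trivial observation that a path on $k+1$ vertices becomes a $(k+1)$-cycle exactly when its two end vertices are joined by one extra edge. Since every edge of the complete graph on $V=\set{1,10,\ldots,10^{n-1}}$ corresponds, via Definition \ref{seqtogrph}, to a member of $C(n-1)$, this ``closing edge'' is always available inside $C(n-1)$, and the statement then reduces entirely to Corollary \ref{acyclic} and Lemma \ref{kcycle}. The case $k=1$ is immediate in both directions ($\tilde G(S)$ is a single edge, $S$ is reduced, and $\sum_{j=1}^{1}e_j=e_1\in C(n-1)$), so throughout I assume $k\geqslant 2$.

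\emph{Forward direction.} Suppose $\tilde G(S)$ is a path with edge set $\set{e_1,\ldots,e_k}$ and end vertices $10^a$ and $10^b$, $a\neq b$. A path has no cycle, hence $G(S)$ is acyclic, so by Corollary \ref{acyclic} the coding sequence $S$ is linearly independent over $\Int_2$; in particular $S$ is reduced. Let $c\in C(n-1)$ be the vector corresponding to the edge of the complete graph joining $10^a$ and $10^b$; since $a\neq b$ and $k\geqslant 2$, $c$ is not among $e_1,\ldots,e_k$. Then $\tilde G(S\cup\set{c})$ is $\tilde G(S)$ with the chord $c$ added, i.e., a $(k+1)$-cycle, so Lemma \ref{kcycle} gives $c+e_1+\cdots+e_k=\mathbf{0}$, that is, $w=c\in C(n-1)$, as required.

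\emph{Converse.} Suppose $w\in C(n-1)$ and $S$ is reduced. Members of $C(n-1)$ are non-null, so $w\neq\mathbf{0}$; together with reducedness this forces $\sum_{e\in A}e\neq\mathbf{0}$ for every nonempty $A\subseteq S$ (for proper $A$ by hypothesis, for $A=S$ because the sum is $w\neq\mathbf{0}$), so $S$ is linearly independent and $k=|S|\leqslant n-1$. Put $T:=S\cup\set{w}$. First $w\notin S$: otherwise $\sum_{e\in S\smallsetminus\set{w}}e=w+w=\mathbf{0}$ with $\emptyset\neq S\smallsetminus\set{w}\subsetneqq S$, contradicting reducedness. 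Next $T$ is reduced: a proper nonempty $A\subsetneqq T$ is either contained in $S$ (and then $\sum_{e\in A}e\neq\mathbf{0}$, or $=w\neq\mathbf{0}$ if $A=S$), or has the form $\set{w}\cup A'$ with $A'\subsetneqq S$, and then $\sum_{e\in A}e=w+\sum_{e\in A'}e=\sum_{e\in S\smallsetminus A'}e$, which is $\neq\mathbf{0}$ by reducedness of $S$ when $A'\neq\emptyset$ and equals $w\neq\mathbf{0}$ when $A'=\emptyset$. Finally $\sum_{e\in T}e=w+w=\mathbf{0}$, $|T|=k+1$ with $3\leqslant k+1\leqslant n$, and $T\subseteq C(n-1)$; hence Lemma \ref{kcycle} applies and $\tilde G(T)$ is a $(k+1)$-cycle, whose edge set is necessarily $T$. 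Deleting from this cycle the edge represented by $w$ leaves the $k$ edges of $S$ forming a path on the same $k+1$ vertices — no vertex is lost, since every vertex of a cycle has degree $2$ — and this path is exactly $\tilde G(S)$.

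The only routine chores are the $\Int_2$-bookkeeping establishing that $T=S\cup\set{w}$ is reduced, and the verification that removing the edge $w$ from $\tilde G(T)$ recovers $\tilde G(S)$ unchanged (no vertex becomes isolated). The single genuinely structural point is that the closing edge of the path — i.e., the vector $w$ — lies in $C(n-1)$: in the forward direction this is where one uses that every edge of $K_n$ corresponds to a vector in $C(n-1)$, while in the converse it is exactly the hypothesis, and it is precisely what makes Lemma \ref{kcycle} applicable to $S\cup\set{w}$.
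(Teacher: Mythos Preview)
Your proof is correct and follows the same overall strategy as the paper: close the path by the extra edge $w=\sum_{j}e_j$ and reduce both directions to Lemma~\ref{kcycle} applied to the $(k+1)$-cycle $\tilde G(S\cup\{w\})$. The differences are only tactical. In the forward direction the paper computes $\sum_j e_j$ directly by iterating Lemma~\ref{p1p2 end}, whereas you bypass that lemma entirely: you first get reducedness from Corollary~\ref{acyclic} (a path is acyclic, hence $S$ is linearly independent), and then read off $w=c\in C(n-1)$ from the ``only if'' part of Lemma~\ref{kcycle}. In the converse you supply details the paper leaves implicit---that $w\notin S$, that $S\cup\{w\}$ is reduced, and that $k+1\leqslant n$---so your argument is in fact the more self-contained of the two.
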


\begin{proof}
If $e_1, e_2, \ldots, e_k$ induce a path (in that order) then it is easy to see that we have $e_i \sim _n |10^{x_{i+1}}-10^{x_i}|$ for some distinct $x_1, x_2, \ldots, x_{k+1} \in \mathbb N \cup \{0\}$. By applying Lemma \ref{p1p2 end} repetitively, we have $\sum \limits_{j=1}^k{e_j} \sim _n |10^{x_{k+1}}-10^{x_1}|$. Thus $\sum \limits_{j=1}^k{e_j} \in C(n-1)$. Let $e=\sum\limits_{j=1}^{k} e_j$. Then elements of $S\cup\set{e}$ form a cycle. Then by Lemma \ref{kcycle}, $S\cup\set{e}$ is reduced and so $S$ is reduced.

\vspace{0.5em}\noindent
Conversely, let $\sum \limits_{j=1}^k{e_j} \in C(n-1)$ and $S$ is reduced. Let $\sum \limits_{j=1}^ke_j=e$. So $e+ \sum \limits_{j=1}^k e_j=\mathbf{0}$ and since $S$ is reduced, $S\cup\set{e}$ is also reduced. Now consider the graph $G'$ such that $\beta (G', n)= \beta (G, n) \cup \{e\}$. Clearly, $S \cup \{e\}$ induces a cycle in $\beta (G', n)$ by Lemma \ref{kcycle}. One edge of that cycle corresponds to $e$, all the other edges correspond precisely to the members of $S$. Hence, $\tilde G(S)$ induces a path in $G$.
\end{proof}

\begin{corollary}\label{pqcn1}
Suppose $p \sim _n |10^i-10^j|, q \sim _n |10^r-10^s|$ where $p\neq q$ and $i, j, r, s \in \{0, 1, 2, \ldots, n-1\}$. Then we have $p+q \in C(n-1)$ if and only if $|\{i, j\} \cap \{r, s\}|=1$. Moreover, $p+q \sim_n |10^x -10^y|$, where $x\in\set{i,j}$, $y\in\set{r,s}$ and $x,y\notin\set{i,j}\cap\set{r,s}$.
\end{corollary}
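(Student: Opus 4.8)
The plan is to read this corollary as the two-edge instance of Lemma~\ref{path}. First I would set $S=\set{p,q}$ and observe that $S$ is automatically reduced: since $p\neq q$ and both belong to $C(n-1)\subseteq\Int_2^{n-1}\smallsetminus\set{\mathbf{0}}$, the only nonempty proper subsets of $S$ are the singletons $\set{p}$ and $\set{q}$, whose sums $p$ and $q$ are non-null. Hence Lemma~\ref{path} applies with the reducedness hypothesis vacuous, and it yields: $\tilde G(S)$ induces a path in $G$ if and only if $p+q\in C(n-1)$. (If one is uneasy about invoking Lemma~\ref{path} when $\abs{S}=2$, the same equivalence can be obtained by hand from Lemma~\ref{p1p2 end} and Lemma~\ref{kcycle}, as indicated below.)

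Next I would translate the path condition into a statement about the index sets. Writing $e=v_iv_j$ and $e'=v_rv_s$ for the edges with $f^\#(e)=p$ and $f^\#(e')=q$ (so $i\neq j$ and $r\neq s$), Definition~\ref{seqtogrph} identifies $\tilde G(S)$ with the subgraph of the complete graph on $\set{1,10,\ldots,10^{n-1}}$ spanned by the two edges $\set{10^i,10^j}$ and $\set{10^r,10^s}$. Since $p\neq q$ these edges are distinct, so $\set{i,j}\neq\set{r,s}$ and therefore $\abs{\set{i,j}\cap\set{r,s}}\in\set{0,1}$. A graph consisting of exactly two distinct edges is a path if and only if those edges share an endpoint; if they are vertex-disjoint it is a union of two copies of $K_2$, which is not a path. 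Thus $\tilde G(S)$ is a path precisely when $\abs{\set{i,j}\cap\set{r,s}}=1$, and combining this with the previous paragraph proves the stated ``if and only if''. For the alternative route avoiding Lemma~\ref{path}: if $\abs{\set{i,j}\cap\set{r,s}}=0$ yet $p+q\in C(n-1)$, then $\set{p,q,p+q}$ is a reduced triple with $p+q+(p+q)=\mathbf{0}$, hence a $3$-cycle by Lemma~\ref{kcycle}, forcing the two edges to meet --- a contradiction; the reverse implication is immediate from the ``moreover'' computation below.

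For the ``moreover'' part, assume $\abs{\set{i,j}\cap\set{r,s}}=1$, let $c$ be the common index, and put $x$ for the unique element of $\set{i,j}\smallsetminus\set{c}$ and $y$ for the unique element of $\set{r,s}\smallsetminus\set{c}$; by construction $x\in\set{i,j}$, $y\in\set{r,s}$, and $x,y\notin\set{i,j}\cap\set{r,s}$. Then $p\sim_n\abs{10^c-10^x}$ and $q\sim_n\abs{10^c-10^y}$, so Lemma~\ref{p1p2 end} gives $p+q\sim_n\abs{10^x-10^y}$, which is exactly the desired conclusion; note that $x\neq y$, since $x=y$ would give $\set{i,j}=\set{c,x}=\set{c,y}=\set{r,s}$ and hence $p=q$, contrary to hypothesis.

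I do not expect any serious obstacle here: every step specializes a result already in hand. The only mild care points are verifying that Lemma~\ref{path} is legitimately applied when $\abs{S}=2$ (or, equivalently, supplying the short direct argument via Lemmas~\ref{kcycle} and~\ref{p1p2 end}) and keeping the endpoint bookkeeping consistent when extracting the shared index $c$.
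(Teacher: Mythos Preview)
Your proposal is correct and follows essentially the same approach as the paper: both invoke Lemma~\ref{path} on the two-element set $S=\{p,q\}$ to equate $p+q\in C(n-1)$ with the edges sharing an endpoint, and both use Lemma~\ref{p1p2 end} to identify $p+q$ once the common index is known. Your version is slightly more careful in explicitly verifying that $S$ is reduced and in offering the alternative direct route via Lemma~\ref{kcycle}, but the argument is the same.
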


\begin{proof}
Consider a graph $G$ with $n$ vertices such that $p, q \in \beta (G, n)$ for some coding sequence $\beta (G, n)$ of $G$. Clearly, $p$ corresponds to end-points $10^i$ and $10^j$, and $q$ corresponds to end-points $10^r$ and $10^s$. First, let $p+q \in C(n-1)$. So by Lemma \ref{path}, it follows that $\tilde G(\{p, q\})$ induces a path in $G$. This implies that the two edges corresponding to $p$ and $q$ have a common vertex. Since $p \not=q$, this gives that $|\{i, j\} \cap \{r, s\}|=1$.

\vspace{0.5em}\noindent
Conversely, let $|\{i, j\} \cap \{r, s\}|=1$. Suppose $j=r$, without loss of generality. Then by Lemma \ref{p1p2 end}, $p+q \sim _n |10^i-10^s|$ (which proves the next part also). Thus, $p+q \in C(n-1)$.
\end{proof}

\begin{lemma}\label{p1p2p3}
Suppose $p_1 \sim _n |10^i-10^j|, p_2 \sim _n |10^k-10^l|, p_3 \sim _n |10^r-10^s|$, where $i, j, k,l, r, s \in \{0, 1, 2, \ldots, n-1\}$ and $p_1,p_2,p_3$ are distinct. If $p_1+p_2, p_2+p_3, p_1+p_3 \in C(n-1)$, then either $p_1+p_2+p_3 =\mathbf{0}$ or $\{i, j\} \cap \{k, l\}=\{i, j\} \cap \{r, s\}= \set{k,l}\cap\set{r,s}$.
\end{lemma}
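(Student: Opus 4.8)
The plan is to translate the three hypotheses $p_1+p_2,\ p_2+p_3,\ p_1+p_3\in C(n-1)$ into statements about how the corresponding edges share endpoints, using Corollary \ref{pqcn1}, and then perform a short combinatorial case split, invoking Proposition \ref{3cycle} in the case where the three edges form a triangle.

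Set $S=\set{p_1,p_2,p_3}\subseteq C(n-1)$. Since $p_1+p_2\in C(n-1)$ and $p_1\neq p_2$, Corollary \ref{pqcn1} gives $\abs{\set{i,j}\cap\set{k,l}}=1$, and likewise $\abs{\set{k,l}\cap\set{r,s}}=1$ and $\abs{\set{i,j}\cap\set{r,s}}=1$. Write $\set{i,j}\cap\set{k,l}=\set{a}$, $\set{k,l}\cap\set{r,s}=\set{b}$, $\set{i,j}\cap\set{r,s}=\set{c}$, keeping in mind that $i\neq j$, $k\neq l$, $r\neq s$ since each $p_t$ represents a genuine edge whose two endpoint labels differ.

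If $a=b=c$, then the three pairwise intersections coincide and the second alternative of the conclusion holds, so assume not. Since permuting $(p_1,p_2,p_3)$ permutes $\set{a,b,c}$ and leaves the hypotheses and both alternatives invariant, I may assume $a\neq b$. As $a,b\in\set{k,l}$ and $\abs{\set{k,l}}=2$, we get $\set{k,l}=\set{a,b}$. I then check $c\notin\set{a,b}$: if $c=a$, then $a\in\set{r,s}$, so $a\in\set{k,l}\cap\set{r,s}=\set{b}$, forcing $a=b$, a contradiction; the case $c=b$ is ruled out symmetrically. Hence $a,b,c$ are pairwise distinct, and from $a,c\in\set{i,j}$ with $a\neq c$ we get $\set{i,j}=\set{a,c}$, and similarly $\set{r,s}=\set{b,c}$. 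Therefore $\tilde{G}(S)$ has vertex set $\set{10^a,10^b,10^c}$ with edges $10^a10^c$, $10^a10^b$, $10^b10^c$, i.e.\ it is a $3$-cycle; Proposition \ref{3cycle} then gives $p_1+p_2+p_3=\mathbf{0}$, the first alternative.

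I do not anticipate a real obstacle: the argument is essentially the observation that three edges which pairwise meet in exactly one vertex either all pass through a common vertex or form a triangle. The only steps requiring care are reading off ``exactly one common endpoint per pair'' correctly from Corollary \ref{pqcn1} (and not merely ``at least one''), and verifying that $c$ is distinct from both $a$ and $b$ in the non-star case, since without that one cannot conclude that the three edges genuinely close up into a $3$-cycle rather than degenerating.
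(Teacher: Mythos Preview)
Your proof is correct and follows essentially the same strategy as the paper: invoke Corollary \ref{pqcn1} to see that each pair of edges shares exactly one endpoint, then split on whether those three common endpoints coincide or not, using Proposition \ref{3cycle} in the triangle case. Your version is somewhat cleaner in that you name the three pairwise intersections $\set{a},\set{b},\set{c}$ symmetrically and explicitly verify $c\notin\set{a,b}$ in the non-star case, whereas the paper fixes $j=k$ and asserts the ``otherwise'' conclusion a little more abruptly; but the underlying argument is the same.
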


\begin{proof}
Consider a graph $G$ with $n$ vertices such that $p_1, p_2, p_3 \in \beta (G, n)$ for some coding sequence $\beta (G, n)$ of $G$. Clearly, $p_1$ corrsponds to end-points $10^i$ and $10^j$, $p_2$ corresponds to end-points $10^k$ and $10^l$ and $p_3$ corresponds to end-points $10^r$ and $10^s$. Now since $p_1+p_2 \in C(n-1)$, by Corollary \ref{pqcn1} we have $|\{i, j\} \cap \{k,l\}|=1$. Without loss of generality, let $j=k$. Then $10^j$ is the common end-point between edges corresponding to $p_1$ and $p_2$. Since we also have that $p_1+p_3 \in C(n-1)$ and $p_2+p_3 \in C(n-1)$, it follows that the edge corresponding to $p_3$ has a common end-point with the edge corresponding to $p_1$ and a common end-point with the edge correponding to $p_2$. If the common end-point in both the cases is $10^j (=10^k)$ then we have $\{i, j\} \cap \{k, l\}=\{i, j\} \cap \{r, s\}=\set{k,l}\cap\set{r,s}$. Otherwise, the common end-point between $p_1$ and $p_3$ must be $10^i$ and the common end-point between $p_2$ and $p_3$ must be $10^l$. Thus, edges corresponding to $p_1, p_2, p_3$ form a cycle involving the vertices $10^i, 10^j, 10^l$. So by Proposition \ref{3cycle}, we have $p_1+p_2+p_3=\mathbf{0}$.  
\end{proof}

\begin{lemma}\label{linear strong}
Let $G, H$ be two simple graphs with $n$ vertices each and suppose $M[\beta (G, n)] \cong _s M[\beta (H, n)]$. Let $T$ be any bijective linear operator on $\Int_2^{n-1}$ giving a strong isomorphism between $M[\beta (G, n)]$ and $M[\beta (H, n)]$. Then for $e_1, e_2 \in C(n-1)$, we have $e_1+e_2 \in C(n-1)$ if and only if $T(e_1)+T(e_2) \in C(n-1)$.
\end{lemma}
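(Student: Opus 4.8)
The plan is to use only two features of the operator $T$: that it is a \emph{linear} operator on $\Int_2^{n-1}$, and that by clause (2)(i) of Definition \ref{strongiso} it restricts to a \emph{bijection of the finite set $C(n-1)$ onto itself}. None of the finer structure of the coding sequences $\beta(G,n)$, $\beta(H,n)$ will be needed; note that $e_1,e_2$ are arbitrary elements of $C(n-1)$, not necessarily columns of $A_1$ or $A_2$, so Lemmas \ref{p1p2 end}--\ref{p1p2p3} play no role here.

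First I would isolate the elementary observation that, for \emph{every} vector $v\in\Int_2^{n-1}$, one has $v\in C(n-1)$ if and only if $T(v)\in C(n-1)$. The forward implication is immediate from $T\big(C(n-1)\big)\subseteq C(n-1)$. For the converse, suppose $T(v)\in C(n-1)$; since $T$ restricted to $C(n-1)$ is \emph{onto} $C(n-1)$, there is $w\in C(n-1)$ with $T(w)=T(v)$, and injectivity of $T$ then forces $v=w\in C(n-1)$. (Equivalently, one may note that $T^{-1}$ is again a bijective linear operator on $\Int_2^{n-1}$ which carries $C(n-1)$ onto itself, and simply run the forward argument for $T^{-1}$.)

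The lemma then follows by applying this observation to the single vector $v=e_1+e_2$: linearity of $T$ gives $T(e_1+e_2)=T(e_1)+T(e_2)$, so $e_1+e_2\in C(n-1)$ if and only if $T(e_1)+T(e_2)\in C(n-1)$. The degenerate case $e_1=e_2$ is absorbed automatically, since then $e_1+e_2=\mathbf{0}=T(e_1)+T(e_2)$, which lies in $C(n-1)$ on neither side.

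There is essentially no obstacle in this argument; the one point deserving a word of care is the converse half of the observation, where it is the \emph{surjectivity} of $T|_{C(n-1)}$ onto $C(n-1)$ (guaranteed by Definition \ref{strongiso}(2)(i)), not merely $T\big(C(n-1)\big)\subseteq C(n-1)$, that is doing the work.
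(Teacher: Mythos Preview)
Your proof is correct and follows essentially the same route as the paper's: both use only the linearity of $T$ and the fact that $T$ restricts to a bijection of $C(n-1)$ onto itself, with the forward direction coming from $T\big(C(n-1)\big)\subseteq C(n-1)$ and the converse from surjectivity of $T|_{C(n-1)}$ together with injectivity of $T$. Your presentation is slightly cleaner in that you isolate the observation ``$v\in C(n-1)\Longleftrightarrow T(v)\in C(n-1)$'' before applying it to $v=e_1+e_2$, and you explicitly handle the degenerate case $e_1=e_2$, but the substance is identical.
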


\begin{proof}
Let $e_1, e_2 \in \beta (G, n)$. First, let $e_1+e_2 \in C(n-1)$. Now $T(e_1)+T(e_2)=T(e_1+e_2) \in T(C(n-1))=C(n-1)$ as restriction of $T$ maps $C(n-1)$ onto itself. Conversely, let $T(e_1)+T(e_2) \in C(n-1)$. So $T(e_1+e_2) \in C(n-1)$. Again since restriction of $T$ maps $C(n-1)$ onto itself, there exists some $e$ in $C(n-1)$ such that $T(e)=T(e_1+e_2)$. Finally, since $T$ is injective, we have $e=e_1+e_2$. So $e_1+e_2 \in C(n-1)$.
\end{proof}

\noindent
Now we prove the theorem which gives a necessary and sufficient condition for two simple graphs to be isomorphic.

\begin{theorem}\label{isothm}
Let $G$ and $H$ be two simple graphs of $n$ vertices each. Then $G \cong H$ if and only if $M[\beta (G, n)] \cong _s M[\beta(H, n)]$ for any coding sequences $\beta (G, n)$ and $\beta (H, n)$ of $G$ and $H$, respectively.
\end{theorem}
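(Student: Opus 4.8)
The plan is to prove the two directions separately, with the forward direction (graph isomorphism $\Rightarrow$ strong matroid isomorphism) being the more straightforward one and the converse being where the real work lies.

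\medskip\noindent
\textbf{Forward direction.} Suppose $\varphi : G \to H$ is a graph isomorphism. Fix coding sequences $\beta(G,n)$ and $\beta(H,n)$ corresponding to some vertex labellings $f_G$ and $f_H$. Composing $\varphi$ with these labellings induces a permutation $\pi$ of $\set{0,1,\ldots,n-1}$, namely the one sending the index of a vertex $v$ in $G$ to the index of $\varphi(v)$ in $H$. I would then define a linear operator $T$ on $\Int_2^{n-1}$ as follows: think of $\Int_2^{n-1}$ as having a spanning set indexed by pairs, where the vector $e \sim_n |10^x - 10^y|$ corresponds to the ``edge'' $\set{x,y}$ of the complete graph on $\set{0,\ldots,n-1}$; send $e \sim_n |10^x-10^y|$ to the vector $\sim_n |10^{\pi(x)} - 10^{\pi(y)}|$. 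To see this is well-defined and linear, use the fact (from Proposition~\ref{3cycle} and Lemma~\ref{kcycle}) that the linear relations among vectors in $C(n-1)$ are exactly the cycle relations of the complete graph, which are preserved by any relabelling $\pi$; concretely, one can define $T$ on the standard basis $\set{f^\#(v_{i-1}v_i) : 1 \le i \le n-1}$ (a basis by Theorem~\ref{tree}, since these edges form a path, hence a spanning tree) and check it carries $C(n-1)$ onto $C(n-1)$ because it realises the relabelling $\pi$ on every vector of consecutive-$1$'s type. Since $\varphi$ is a graph isomorphism, $T$ carries $\beta(G,n)$ bijectively onto $\beta(H,n)$, giving $M[\beta(G,n)] \cong_s M[\beta(H,n)]$.

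\medskip\noindent
\textbf{Converse direction.} Suppose $M[\beta(G,n)] \cong_s M[\beta(H,n)]$ via a bijective linear $T$ on $\Int_2^{n-1}$ restricting to an automorphism of $C(n-1)$ and to a bijection $\beta(G,n) \to \beta(H,n)$. The goal is to extract a graph isomorphism $G \to H$. The idea is that $T$ must ``come from'' a permutation of the vertex set. First I would establish that $T$ induces a well-defined permutation $\pi$ of the vertices $\set{1,10,\ldots,10^{n-1}}$ (equivalently of their exponents): using Corollary~\ref{pqcn1}, two distinct vectors $p,q \in C(n-1)$ share a vertex iff $p+q \in C(n-1)$, and by Lemma~\ref{linear strong} this incidence relation is preserved by $T$; together with Lemma~\ref{p1p2p3} (which pins down \emph{which} vertex is shared in a triangle-free configuration), one recovers the full incidence structure of the complete graph on $n$ vertices from the relation ``$p+q \in C(n-1)$'' on $C(n-1)$. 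Hence $T$ induces an automorphism of the complete graph $K_n$, i.e. a permutation $\pi$ of the vertices, such that $T(e) \sim_n |10^{\pi(x)} - 10^{\pi(y)}|$ whenever $e \sim_n |10^x-10^y|$. Then, since $T$ maps $\beta(G,n)$ onto $\beta(H,n)$, the permutation $\pi$ carries the edge set of $G$ (read off from $\beta(G,n)$ via Definition~\ref{seqtogrph}) onto the edge set of $H$, so $\pi$ is the desired graph isomorphism.

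\medskip\noindent
The main obstacle is the first step of the converse: showing that an arbitrary bijective linear $T$ preserving $C(n-1)$ genuinely arises from a vertex permutation, rather than from some more exotic linear symmetry of the configuration $C(n-1)$. The delicate point is handling the ``triangle'' case in Lemma~\ref{p1p2p3}, where three pairwise-adjacent edges might form a $3$-cycle rather than a star, so that the ``shared vertex'' is not unique — one must argue that the global consistency of the incidence relation across all of $C(n-1)$ (for $n \ge 4$; small cases $n = 2, 3$ being trivial or checked directly) forces a coherent choice, i.e. that the vertex $10^x$ can be recovered as (essentially) the common intersection of all $p \in C(n-1)$ in a suitable pencil. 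Once $\pi$ is in hand, the remainder is a direct translation between the vector description of edges and the combinatorial one, using Definition~\ref{seqtogrph} and Lemma~\ref{gbeta}.
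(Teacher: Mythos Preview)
Your proposal is correct and follows essentially the same route as the paper: in both directions the key object is a permutation of the vertex indices, and the same auxiliary results (Corollary~\ref{pqcn1}, Lemma~\ref{p1p2p3}, Lemma~\ref{linear strong}) are invoked to pass between the linear operator $T$ on $C(n-1)$ and that permutation.

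The one noteworthy difference is in how the permutation is extracted in the converse direction. The paper proceeds constructively: it fixes the ``path'' basis $e_i \sim_n |10^i - 10^{i-1}|$, defines $g(i)$ to be the exponent of the unique common endpoint of $T(e_i)$ and $T(e_{i+1})$, and then checks injectivity of $g$ by a direct case analysis using Lemma~\ref{p1p2p3} on triples $e_i, e_{i+1}, e_j$. You instead argue more globally, observing that the relation ``$p+q\in C(n-1)$'' on $C(n-1)$ recovers the edge-incidence structure of $K_n$, so that any linear bijection preserving $C(n-1)$ must come from a vertex permutation. Your version is conceptually cleaner and makes the role of Lemma~\ref{p1p2p3} (distinguishing a star from a triangle) transparent; the paper's path-based construction sidesteps the triangle ambiguity you flag by working only with consecutive $e_i$'s, where the configuration is always a star. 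Either way the obstacle you identify is exactly the right one, and your proposed resolution (global consistency for $n\geqslant 4$, small cases by hand) is sound. Your proposal also states explicitly the final step---that $T$ carrying $\beta(G,n)$ to $\beta(H,n)$ forces $\pi$ to carry edges of $G$ to edges of $H$---which the paper leaves implicit after constructing $g$.
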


\begin{proof}
We consider vertices of both $G$ and $H$ are labeled by $1, 10, 10^2, \ldots, 10^{n-1}$. First, let $G \cong H$. So there exists a permutation $g$ on the set $\{0, 1, 2, \ldots, n-1\}$ such that for any $r, s \in \{0, 1, 2, \ldots, n-1\}$, we have $10^r$ and $10^s$ are adjacent in $G$ if and only if $10^{g(r)}$ and $10^{g(s)}$ are adjacent in $H$. We consider the graphs $K^{(1)}, K^{(2)}$, where $K^{(1)}=G \cup \bar{G}$ and $K^{(2)}=H \cup \bar{H}$, where $\bar{G}$ and $\bar{H}$ are complements of graphs $G$ and $H$ respectively. Now for each $e \in C(n-1)$, if $e \sim _n |10^i-10^j|$, we define $T(e) \sim _n |10^{g(i)}-10^{g(j)}|$. Clearly, $T$ is a well-defined mapping from $C(n-1)$ into itself, since $|10^p-10^q|$ uniquely determines the pair $\{p, q\}$ for any $p, q$. 

\vspace{1em}\noindent
Now $g$, being a permutation, is a bijection. Suppose $e_1, e_2$ are distinct elements of $C(n-1)$. Let $e_1 \sim _n |10^a-10^b|$ and $e_2 \sim _n |10^c-10^d|$. Clearly, $\{a, b\} \not=\{c, d\}$. Bijectiveness of $g$ implies that $\{g(a), g(b)\} \not=\{g(c), g(d)\}$.  This shows that $T(e_1) \not= T(e_2)$. So we have that $T$ is one-to-one. Since $T$ is a mapping from a finite set into itself, injectiveness of $T$ implies that $T$ is a bijective mapping from $C(n-1)$ onto itself. Again, since $10^r$ and $10^s$ are adjacent in $G$ if and only if $10^{g(r)}$ and $10^{g(s)}$ are adjacent in $H$ for distinct $r, s \in \{0, 1, 2, \ldots, n-1\}$, we have that $T$ restricted on $\beta (G, n)$ is a mapping from $\beta (G, n)$ into $\beta (H, n)$. Injectiveness of $T$ ensures the injectiveness of $T$ restricted to $\beta (G, n)$. Since $\beta (G, n)$ and $\beta (H, n)$ are finite sets with equal cardinality, we have that $T$ restricted to $\beta (G, n)$ is a bijection from $\beta (G, n)$ onto $\beta (H, n)$. 

\vspace{1em}\noindent
Next we observe that $T$ is `linear' on $C(n-1)$, i.e., if $p,q\in C(n-1)$ such that $p+q\in C(n-1)$, then $T(p)+T(q)=T(p+q)$. Let $p\sim_n |10^i-10^j|$, $q\sim_n |10^r-10^s|$ such that $p+q\in C(n-1)$. Then by Corollary \ref{pqcn1}, we have $|\set{i,j}\cap\set{r,s}|=1$. Without loss of generality, we assume $j=r$. So $g(j)=g(r)$ and $p+q\sim_n |10^i-10^s|$. Now $T(p)\sim_n |10^{g(i)}-10^{g(j)}|$ and $T(q)\sim_n |10^{g(r)}-10^{g(s)}|$. Since $g(j)=g(r)$, we have $T(p)+T(q)\in C(n-1)$ and $T(p)+T(q)\sim_n |10^{g(i)}-10^{g(s)}|$. Since $T(p+q)\sim_n |10^{g(i)}-10^{g(s)}|$, we have $T(p+q)=T(p)+T(q)$.

\vspace{1em}\noindent
Now let $e_i \sim _n |10^{i}-10^{i-1}|$ for all $i=1, 2, \ldots, n$. We know that $B=\{e_i \mid i=1, 2, \ldots, n\}$ is a basis of $\mathbb Z_2^{n-1}$. Since $T$ is defined on each $e_i$ as the latter is in $C(n-1)$ (in fact, $T(e_i) \sim _n |10^{g(i)}-10^{g(i-1)}|$), we can extend $T$ linearly to a linear operator $T_1$ on $\mathbb Z_2^{n-1}$. Bijectiveness of $T_1$ follows from injectiveness of $T$ on $C(n-1)$ (which ensures distinct images under $T$ for distinct elements of $B$, thus ensuring injectiveness of $T_1$) and finiteness of $\mathbb Z_2^{n-1}$. As $M[\beta (G, n)]$ and $M[\beta (H, n)]$ are also of the same order, we have $M[\beta (G, n)] \cong _s M[\beta (H, n)]$. 

\vspace{1em}\noindent
Conversely, let $M[\beta (G, n)] \cong _s M[\beta(H, n)]$. So there exists a bijective linear operator $T$ satisfying the properties mentioned in the Definition \ref{strongiso}. We find a permutation $g$ on the set $\{0, 1, 2, \ldots, n-1\}$ such that for any distinct $r, s \in \{0, 1, 2, \ldots, n-1\}$, $10^r$ and $10^s$ are adjacent in $G$ if and only if $10^{g(r)}$ and $10^{g(s)}$ are adjacent in $H$. Clearly, such a $g$ acts as an isomorphism between $G$ and $H$. Now for $i=1, 2, \ldots, n$, define $e_i$ as the element in $C(n-1)$ which has 1 in its $i^{\text{th}}$ co-ordinate from the right ($(n-i)^\text{th}$ co-ordinate from the left) and 0 in remaining coordinates. Then $e_i \sim _n |10^i-10^{i-1}|$. Now for $i=1,2,\ldots,n-1$, we have $e_i+e_{i+1} \in C(n-1)$. So from Lemma \ref{linear strong}, we have $T(e_i)+T(e_{i+1}) \in C(n-1)$. From Corollary \ref{pqcn1}, the edges corresponding to $T(e_i)$ and $T(e_{i+1})$ have a (unique) common point, say $10^x$. We define $g(i-1)=x$. This defines $g$ as a mapping from $\set{0,1,2,\ldots,n-2}$ into $\{0, 1, 2, \ldots, n-1\}$. Now we show that $g$ is one-to-one.

\vspace{1em}\noindent
If possible, let $g(i-1)=g(j-1)$ for some $i \not=j$, $i,j\in\set{1,2,\ldots,n-1}$. By the above definition of $g$, $g(i-1)$ and $g(j-1)$ are one of the common end points of $T(e_i), T(e_{i+1})$ and $T(e_j),T(e_{j+1})$ respectively. So the edges corresponding to $T(e_i)$ and $T(e_j)$ have a common end-point $g(i-1)$ ($=g(j-1)$). So from Corollary \ref{pqcn1}, we have $T(e_i)+T(e_j) \in C(n-1)$. Linearity of $T$ implies that $T(e_i+e_j) \in C(n-1)$. Since $T$ restricted to $C(n-1)$ is a bijection from $T$ onto itself, we have $e_i+e_j \in C(n-1)$.  Again, $e_j+e_{i+1} \in C(n-1)$ for the same reason as $g(i-1)$ is a common end-point between edges corresponding to $T(e_j)$ and $T(e_{i+1})$. We also have $e_i+e_{i+1} \in C(n-1)$. So $e_i+e_j, e_{i+1}+e_i, e_j+e_{i+1} \in C(n-1)$. If possible, let $e_i+e_j+e_{i+1}=\mathbf{0}$. However, then $e_j=e_i+e_{i+1}$, which is impossible by definition of $e_i$'s. Thus, from Lemma \ref{p1p2p3}, $10^i$ is either $10^j$ or $10^{j-1}$, i.e., $i=j$ or $j-1$. Similar argument on $e_i,e_j,e_{j+1}$ gives us $j=i$ or $i-1$. But $i=j-1$ and $j=i-1$ both cannot be true simultaneously. So we have $i=j$ which is a contradiction. Thus $g$ is injective. So we define $g(n-1)=x$, where $x\in\set{0,1,2,\ldots,n-1}\smallsetminus\set{g(0),g(1),\ldots,g(n-2)}$. Then $g$ is defined on $\set{0,1,2,\ldots,n-1}$ into itself. Moreover since $g$ is injective, it is a permutation on the set $\{0, 1, 2, \ldots, n-1\}$.
\end{proof}

%\newpage

\bibliographystyle{amsplain}

\end{document}